\documentclass[11pt,a4paper,twoside]{article}
\usepackage{amsfonts} 
\usepackage{amssymb}
\usepackage{amsmath, amsthm, array, ifthen}
\usepackage{mathrsfs} 
\usepackage{caption} 
\usepackage{latexsym}
\usepackage{comment}
\usepackage{mathtools,nccmath}
\usepackage{euscript}

\DeclarePairedDelimiterX\Ioo[2]{\rbrack}{\lbrack}{#1,#2}
\DeclarePairedDelimiterX\Iof[2]{\rbrack}{\rbrack}{#1,#2}
\DeclarePairedDelimiterX\Ifo[2]{\lbrack}{\lbrack}{#1,#2}
\DeclarePairedDelimiterX\Iff[2]{\lbrack}{\rbrack}{#1,#2}
\DeclarePairedDelimiterX\Jfo[2]{\lbrack}{\lbrack}{#1;#2}
\DeclarePairedDelimiterX\Jff[2]{\lbrack}{\rbrack}{#1;#2}

\usepackage{enumerate}

\usepackage[np]{numprint} 
\npdecimalsign{\ensuremath{.}}

\usepackage{enumitem}

\usepackage{xfrac}
\usepackage{graphicx}

\usepackage{lmodern}

\usepackage[utf8]{inputenc} 

\usepackage{color}

\usepackage[colorlinks]{hyperref}

\newcommand{\R}{\mathbb{R}}
\newcommand{\C}{\mathbb{C}}
\newcommand{\Z}{\mathbb{Z}}

\newcommand{\D}{\mathcal{D}}

\newcommand{\tri}{\begin{scriptsize}$\triangleright$\end{scriptsize}}

\newcommand{\mell}[2][f]{\mathcal{M}(f)} 

\newcommand{\diff}{\mathop{}\mathopen{}\mathrm{d}}

\newcommand{\sdfrac}[2]{\mbox{\small$\displaystyle\frac{#1}{#2}$}}

\theoremstyle{theorem}
\newtheorem{prop}{Proposition}
\newtheorem{prop/not}{Proposition/notation}
\newtheorem{thm}{Theorem}[section]

\newtheorem{cor}{Corollary}[section]
\newtheorem{lem}{Lemma}

\theoremstyle{remark}
\newtheorem{rem}{Remark}

\newcommand{\ioe}{\leqslant}
\newcommand{\soe}{\geqslant}

\usepackage[usenames,dvipsnames,svgnames,table]{xcolor}

\providecommand{\keywordsubject}[1]{\textbf{2020 Mathematics Subject Classification :} \: #1}
\providecommand{\keywords}[1]{\textbf{Keywords :} #1}

\title{On a Smoothed Walfisz Divisor Problem}

\usepackage[left=1.5cm,right=1.5cm,top=1.5cm,bottom=1.5cm]{geometry}
\author{ Olivier Bordellès and Florian Daval}
\date{}
\begin{document}

\maketitle

\footnote{\keywordsubject{11A25, 11N37, 11L07}}

\footnote{\keywords{Walfisz divisor problem, Chowla-Walum sums}}

\begin{abstract}
This work is in the spirit of our previous investigation on a smooth Dirichlet divisor problem, where we now replace the Dirichlet divisor function $\tau$ by the sum-of-divisors function $\sigma$. We prove a totally explicit asymptotic formula for the sum of $\sigma(n)$ twisted by the weight $1-x/n$, which enables us to eliminate the difficult part in the classical average order of $\sigma(n)$. As a corollary, we deduce the convergence of an integral dealing with the error term in the Walfisz divisor problem. We also provide an appendix containing the necessary explicit results derived from the mean value theorem and the Euler-Maclaurin summation formula.
\end{abstract}

\section{Introduction}

\subsection{Main results}

\noindent
In \cite{bor26}, we studied the sum
$$S_{\tau} (x) := \sum_{n \leqslant x} \tau(n) \left( 1 - \mfrac{x}{n} \right) $$
whose weight $1-x/n$ allowed us to eliminate the very difficult sum $\sum_{n \ioe \sqrt{x}} \psi \left( \frac{x}{n} \right)$, where $\psi(t) := \{ t \} - \frac{1}{2}$  is the first Bernoulli periodic function and $\{ t \}$ is the fractional part of $t$. The main objective of this work is a similar study of the sum 
$$S_{\sigma} (x) := \sum_{n \leqslant x} \sigma(n) \left( 1 - \mfrac{x}{n} \right) $$
where $\sigma(n) := \displaystyle \medop \sum_{d \mid n} d$ is the sum-of-divisors function. Classical calculations, such as in \cite[pp. 472-473]{bor20} for instance, show that
$$\sum_{n \leqslant x} \sigma(n) = \sdfrac{x^2 \zeta(2)}{2} - x \sum_{n \ioe \sqrt{x}} \mfrac{1}{n} \psi \left( \mfrac{x}{n}\right) - \mfrac{x^2}{2} \sum_{n > x} \mfrac{1}{n^2} + \mfrac{1}{2} \sum_{n\leqslant x} \left( B_2 \left( \left\lbrace \mfrac{x}{n} \right\rbrace \right) - \mfrac{1}{6} \right) $$
where $B_2 \left( \left\lbrace t \right\rbrace \right)  := \left\lbrace t \right\rbrace^{2} - \left\lbrace t \right\rbrace + \frac{1}{6}$ is the second Bernoulli periodic function, and the trivial bounds applied to the above sums yield
$$\sum_{n \leqslant x} \sigma(n) = \sdfrac{x^2 \zeta(2)}{2} + O^{\star} \left( \tfrac{1}{2} x \log x + \tfrac{9}{8} x + \tfrac{1}{2} \right )$$
for all $x \geqslant 1$. In \cite{wal63}, Walfisz was the first to get a non-trivial upper bound in this problem. To estimate sums of the shape $\medop \sum_{n \sim N} \mfrac{1}{n} \psi \left( \mfrac{x}{n}\right)$, he used a van der Corput type estimate for large $N$, and a Vinogradov type result for smaller ones. This allowed him to derive the following improvement
$$\sum_{n \leqslant x} \sigma(n) = \sdfrac{x^2 \zeta(2)}{2} + O \Bigl( x (\log x)^{2/3} \Bigr)$$
which is still the best result to date for this problem. Therefore, we called it the \textit{Walfisz divisor problem}. As can be seen in Lemma~\ref{le:main}, when the weight $1-x/n$ is included, the sum $\medop \sum_{n \ioe \sqrt{x}} \mfrac{1}{n} \psi \left( \mfrac{x}{n}\right)$ disappears and is replaced by the following easier one
$$\sum_{n \leqslant \sqrt{x}} \left(1+\mfrac{n^2}{x} \right) B_2 \left( \left\lbrace \mfrac{x}{n} \right\rbrace \right).$$

\begin{thm}
\label{th:sigma} 
For all real numbers $x \geqslant 1$, define
\begin{align*}
\D(x)&:=\sum_{n \ioe x}\sigma(n)- \mfrac{x^2 \zeta(2)}{2} \\
d(x)&:=\sum_{n \ioe x} \mfrac{\sigma(n)}{n}-\left( x \zeta(2) - \tfrac{1}{2} \log x \right)\;.
\end{align*}
Then, for all $x \geqslant 650$, we have
$$\D(x)-xd(x) = \mfrac{x}{2} \left( \log 2 \pi + \gamma - 1 \right) + O^{\star} \Bigl( 16 \, x^{3/10} \log x \Bigr).$$
\end{thm}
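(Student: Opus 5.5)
Writing $S_\sigma(x) = \sum_{n\le x}\sigma(n)(1-x/n)$, one checks directly from the definitions that
$$\D(x) - x d(x) = S_\sigma(x) + \tfrac{x^2\zeta(2)}{2} - \tfrac{x}{2}\log x .$$
So the statement is equivalent to
$$S_\sigma(x) = -\tfrac{x^2\zeta(2)}{2} + \tfrac{x}{2}\log x + \tfrac{x}{2}(\log 2\pi+\gamma-1) + O^\star\bigl(16\,x^{3/10}\log x\bigr).$$
I would prove this by opening $\sigma(n)=\sum_{dm=n} m$, which gives $S_\sigma(x) = \sum_{dm\le x}(m - x/d)$.

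\textbf{Step 1: the inner sum.} For fixed $d$, put $y=x/d$. A short computation with $\lfloor y\rfloor = y-\{y\}$ gives
$$\sum_{m\le y}(m-y) = -\tfrac{y^2}{2} + \tfrac{y}{2} + \tfrac12\bigl(B_2(\{y\}) - \tfrac16\bigr).$$
This is exactly where the weight kills the $\psi$-term: no first Bernoulli function survives. Summing over $d\le x$ gives
$$S_\sigma(x) = -\tfrac{x^2}{2}\sum_{d\le x}\tfrac{1}{d^2} + \tfrac{x}{2}\sum_{d\le x}\tfrac1d - \tfrac{\lfloor x\rfloor}{12} + \tfrac12\sum_{d\le x}B_2\bigl(\{x/d\}\bigr).$$
The first three terms are handled by the explicit Euler--Maclaurin estimates of the appendix for $\sum_{d>x} d^{-2}$ and $\sum_{d\le x}1/d$. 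They contribute $-x^2\zeta(2)/2 + \tfrac{x}{2}(\log x+\gamma) + \tfrac{x}{2} - \tfrac{x}{12} + O(1)$, where the $\tfrac{x}{2}$ comes from the tail $\tfrac{x^2}{2}\sum_{d>x}d^{-2}$.

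\textbf{Step 2: the $B_2$ sum.} The remaining sum $\sum_{d\le x}B_2(\{x/d\})$ is of size $x$ and carries the rest of the constant. Here I would invoke Lemma~\ref{le:main}, which performs the Dirichlet hyperbola split at $\sqrt{x}$. Its output should be explicit main terms plus the sum
$$\sum_{n\le\sqrt x}\Bigl(1+\tfrac{n^2}{x}\Bigr)B_2\bigl(\{x/n\}\bigr).$$
The main terms involve integrals such as $x\int_1^\infty B_2(\{t\})\,t^{-2}\,dt$, and I expect this integral to produce $\log 2\pi$ via Stirling's formula. I would then check that all constants combine to $\tfrac{x}{2}(\log 2\pi+\gamma-1)$, with the $-x/12$ cancelling against a $+x/12$ coming from the $\tfrac16$ inside $B_2$.

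\textbf{Step 3: the main obstacle.} The real work is bounding $\sum_{n\le\sqrt x}(1+n^2/x)B_2(\{x/n\})$ by $O(x^{3/10}\log x)$ with explicit constants. I would use the absolutely convergent Fourier expansion $B_2(\{t\}) = \frac{1}{\pi^2}\sum_{h\ge1} h^{-2}\cos(2\pi h t)$, and remove the smooth weight $1+n^2/x$ by partial summation. The range $n\le x^{3/10}$ is bounded trivially, contributing $\ll x^{3/10}$. The range $x^{3/10}<n\le\sqrt x$ is cut into dyadic blocks $N<n\le 2N$. On each block I would apply an explicit van der Corput estimate from the appendix (second or third derivative test, with $\lambda_k\asymp hx/N^{k+1}$) to $\sum_{n\sim N} e(hx/n)$. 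Each block should then be $\ll h^{\alpha}x^{3/10}$ with $\alpha<1$, so the sum over $h$ converges against $h^{-2}$, and summing over the $O(\log x)$ blocks gives the $x^{3/10}\log x$.

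The delicate part is choosing the cut-off and the derivative test so that the exponent comes out as exactly $3/10$, for example by optimising a truncation in $h$ against a bound of the shape $N^{1/2}(hx)^{1/6}+\dots$. After that, the constant $16$ and the threshold $x\ge650$ require carefully tracking all explicit constants, which is where I expect most of the labour to lie.
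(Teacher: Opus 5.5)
Your plan follows the paper's route. Once you invoke Lemma~\ref{le:main}, it already gives $\D(x)-xd(x)=\frac{x}{2}(\log 2\pi+\gamma-1)+\frac12\bigl(G_{2,0,2}(x)+\frac1x G_{2,2,2}(x)\bigr)+O^\star(\frac12)$ directly, so your one-sided Step~1, though correct, is not needed. The paper then bounds these Chowla--Walum sums as in your Step~3: it uses the absolutely convergent Fourier series of $B_2$ with no truncation in $h$, the explicit third-derivative test on dyadic blocks $N\leqslant x^{2/5}$, and the second-derivative test for $N>x^{2/5}$; both give $x^{3/10}$ at that cut-off. The constant $16$ and the range $x\geqslant 650$ then follow from the explicit bounds $|G_{2,0,2}(x)|<(17.94\,x^{3/10}+4.04\,x^{1/4})(\frac{\log x}{\log 4}+1)$ and $|G_{2,2,2}(x)|<47.76\,x^{5/4}(\frac{\log x}{\log 256}+1)$.
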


\begin{rem}
Note that $ \tfrac{1}{2} \left( \log 2 \pi  + \gamma -1 \right) \doteq \np{0.7075} \dotsc $
\end{rem}

\noindent
As a counterpart of \cite[Corollary~1]{bor26}, we derive the next consequence from Theorem~\ref{th:sigma}. A related integral has been studied in \cite[Theorem~1]{zhai13}.

\begin{cor}
\label{cor:sigma}
For all $x \geqslant 650$,
$$\int_1^{x} \sdfrac{\D(t)}{t^{2}} \diff{t} < 0.$$
\end{cor}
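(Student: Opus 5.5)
The plan is to express the integral exactly in terms of $\D(x)-x\,d(x)$ by partial summation, and then read off its sign from Theorem~\ref{th:sigma}.

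\textbf{Step 1: an exact identity.} Put $A(t):=\sum_{n\ioe t}\sigma(n)$, so that $A(t)=\tfrac12\zeta(2)t^2+\D(t)$. Abel summation with the weight $1/t$ gives, for $x\soe 1$,
$$\sum_{n\ioe x}\mfrac{\sigma(n)}{n}=\mfrac{A(x)}{x}+\int_1^x\mfrac{A(t)}{t^2}\diff t .$$
Substituting $A(t)=\tfrac12\zeta(2)t^2+\D(t)$ into both terms yields
$$\sum_{n\ioe x}\mfrac{\sigma(n)}{n}=x\zeta(2)-\tfrac12\zeta(2)+\mfrac{\D(x)}{x}+\int_1^x\mfrac{\D(t)}{t^2}\diff t .$$
We then insert the definition of $d(x)$, namely $\sum_{n\ioe x}\sigma(n)/n=x\zeta(2)-\tfrac12\log x+d(x)$. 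This gives the exact identity
$$\int_1^x\mfrac{\D(t)}{t^2}\diff t=-\mfrac{\D(x)-x\,d(x)}{x}+\tfrac12\zeta(2)-\tfrac12\log x .$$
The sign in front of $\D(x)-x\,d(x)$ must be kept exactly as it comes out of this computation.

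\textbf{Step 2: apply Theorem~\ref{th:sigma}.} For $x\soe 650$ the theorem turns the identity into
$$\int_1^x\mfrac{\D(t)}{t^2}\diff t=\tfrac12\zeta(2)-\tfrac12(\log 2\pi+\gamma-1)-\tfrac12\log x+O^{\star}\bigl(16\,x^{-7/10}\log x\bigr).$$
The constant term is $\tfrac12\zeta(2)-\tfrac12(\log2\pi+\gamma-1)\approx 0.8225-0.7075=0.115$, which is small and positive. The term $-\tfrac12\log x$ is negative and dominates.

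\textbf{Step 3: numerical check.} The function $x^{-7/10}\log x$ is decreasing for $x\soe e^{10/7}$, so on $x\soe 650$ the error term is at most its value at $x=650$. There $650^{7/10}\approx 93$ and $\log 650\approx 6.48$, so the error is at most about $16\cdot 6.48/93\approx 1.12$. Meanwhile $\tfrac12\log x\soe\tfrac12\log 650\approx 3.24$. Hence the integral is at most
$$0.115+1.12-3.24<0$$
for every $x\soe 650$.

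The only delicate point is Step 1: the sign bookkeeping in the partial summation, including the boundary term $-\tfrac12\zeta(2)$ from the lower limit. This is what makes the $-\tfrac12\log x$ term come out with a negative sign, which is exactly what the corollary needs. Once the identity is correct, the rest is a routine explicit estimate.
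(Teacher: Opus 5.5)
Your proof is correct and is essentially the paper's own argument. You use the same partial-summation identity $\int_1^x \D(t)t^{-2}\diff t=-\frac{\D(x)-x\,d(x)}{x}+\frac12\zeta(2)-\frac12\log x$, then Theorem~\ref{th:sigma}, then a numerical check at $x=650$ via the monotonicity of $x^{-7/10}\log x$, which you make explicit where the paper only says the right-hand side is ``easily seen'' to be negative.
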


\noindent
More surprisingly, the following corollary shows that the integral $\displaystyle \medint\int_1^{\infty} \mfrac{\D(t) + t/2}{t^{2}} \diff{t}$ converges even though it is known that $\D(x) = \Omega \left( x \log \log x \right) $. See for instance \cite{gron13}, \cite{wig14}, \cite{pet87} or \cite{ishi93} for more precise oscillation results.

\begin{cor}
\label{cor:sigma_2}
The integral $\displaystyle \medint \int_1^{\infty} \sdfrac{\D(t) + t/2}{t^{2}} \diff{t}$ converges and
$$\int_1^{\infty} \sdfrac{\D(t) + t/2}{t^{2}} \diff{t} = \mfrac{1}{2} \left(1 + \zeta(2) - \gamma - \log 2 \pi \right) \doteq \np{0.11492} \dotsc$$
\end{cor}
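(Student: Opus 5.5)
The plan is to express the integral through $\D$ and $d$ by partial summation, and then use Theorem~\ref{th:sigma} to control what remains.

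\textbf{Step 1 (partial summation).} Set $A(t):=\sum_{n\ioe t}\sigma(n)=\D(t)+\tfrac{1}{2}\zeta(2)t^2$. Then
$$\sum_{n\ioe x}\mfrac{\sigma(n)}{n}=\mfrac{A(x)}{x}+\int_1^x \mfrac{A(t)}{t^2}\diff t .$$
The main terms integrate to
$$\int_1^x \tfrac12\zeta(2)\diff t=\tfrac12\zeta(2)(x-1).$$
Substituting the definitions of $\D$ and $d$ and comparing gives
$$x d(x)-\D(x)=x\int_1^x\mfrac{\D(t)}{t^2}\diff t+\tfrac{x}{2}\log x-\tfrac{x}{2}\zeta(2).$$
Rewriting with $\int_1^x \mfrac{t/2}{t^2}\diff t=\tfrac12\log x$, this becomes
$$\int_1^x\mfrac{\D(t)+t/2}{t^2}\diff t=d(x)-\mfrac{\D(x)}{x}+\mfrac{\zeta(2)}{2}.$$
This identity holds for every $x\soe1$.

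\textbf{Step 2 (apply Theorem~\ref{th:sigma}).} For $x\soe650$, Theorem~\ref{th:sigma} gives
$$d(x)-\mfrac{\D(x)}{x}=-\tfrac12(\log2\pi+\gamma-1)+O^\star\bigl(16x^{-7/10}\log x\bigr).$$
Letting $x\to\infty$, the error term tends to $0$, so the integral converges. Its value is
$$\tfrac12\zeta(2)-\tfrac12(\log 2\pi+\gamma-1)=\tfrac12\bigl(1+\zeta(2)-\gamma-\log2\pi\bigr),$$
which is the claimed constant.

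The only delicate point is to carry out Step 1 carefully, getting the boundary terms at $t=1$ right. Since $A(t)=0$ for $t<1$, the partial summation starts cleanly at $1$ and no extra constant appears. The remainder of the argument is a direct passage to the limit.
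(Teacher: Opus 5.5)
Your proof is correct and follows essentially the paper's own argument. You use the same partial-summation identity; the paper writes it as $\D(x)/x-d(x)=-\tfrac12\log x+\tfrac{\zeta(2)}{2}-\int_1^x \D(t)t^{-2}\,\mathrm{d}t$, which is your Step~1 rearranged. You then combine it with Theorem~\ref{th:sigma} divided by $x$ and let $x\to\infty$. The only cosmetic difference is that the paper first isolates $\int_1^x \D(t)t^{-2}\,\mathrm{d}t$ (which also gives Corollary~\ref{cor:sigma}) and then adds $\tfrac12\log x$, whereas you fold the $t/2$ term in directly.
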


\subsection{Notations}

\noindent
We will use the first two Bernoulli polynomials $B_1(x) = x - \frac{1}{2}$ and $B_2(x)=x^2-x+\frac{1}{6}$, and their attached periodic Bernoulli functions $B_j(\{x\})$. It is customary to denote $B_1 (\{x\}) := \psi(x)$ the $1$st periodic Bernoulli function, and note that $\psi(x)^2 = B_2 (\{x\}) + \frac{1}{12}$. We also define $M_{\sigma}(x) := \frac{1}{2} \zeta(2) x^{2}$ and $ m_{\sigma}(x) := x \zeta(2) - \frac{1}{2} \log x$. With these notations, note that $\D(x)-xd(x) = S_{\sigma}(x) - M_{\sigma}(x) + x m_{\sigma}(x)$. Finally, $H(x):=\sum_{n \ioe x} \frac{1}{n}$ will be the harmonic sum and, for all $x \geqslant 1$, $\alpha \in \R_{> 1}$, $\beta \in \R_{\geqslant 0}$ and $j \in \Z_{\geqslant 1}$, the Chowla-Walum sums are defined as
$$G_{\alpha,\beta,j} (x) := \sum_{n \leqslant x^{1/\alpha}} n^{\beta} B_j \left( \left\lbrace \mfrac{x}{n} \right\rbrace \right).$$

\section{Proofs}

\subsection{Consequences of the Euler-Mclaurin summation formula}

\noindent
The following three lemmas are immediate consequences of Theorem~\ref{th:em_synthèse} with $k=2$ for \eqref{eq:harm_1} and $k=3$ for the others.

\begin{lem}[Harmonic numbers]
For all real numbers $z \geqslant 1$
\begin{equation}
   H(z) = \log z + \gamma - \mfrac{\psi(z)}{z} + O^{\star} \left( \mfrac{1}{8 z^2} \right) \label{eq:harm_1}
\end{equation}
and
\begin{equation}
   H(z) = \log z + \gamma - \mfrac{\psi(z)}{z} - \mfrac{B_2(\{z\})}{2z^{2}} + O^{\star} \left( \mfrac{1}{\pi^3 z^3} \right). \label{eq:harm_2}
\end{equation}
\end{lem}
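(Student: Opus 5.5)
We apply the Euler--Maclaurin summation formula (Theorem~\ref{th:em_synthèse} of the appendix) to $f(t)=1/t$ on $[1,z]$. The boundary term at $t=1$ and the convergent tail integrals then combine into Euler's constant $\gamma$. For \eqref{eq:harm_1} we take $k=2$ and write
$$\sum_{n\ioe z}\mfrac1n=\int_1^z\mfrac{\diff t}{t}+1-\mfrac{\psi(z)}{z}+\mfrac{1}{2}\cdot\mfrac{1}{1}\cdot\Bigl(\text{boundary at }1\Bigr)+\dotsb.$$
Concretely, we use the formula in the form $\sum_{1<n\ioe z} f(n)=\int_1^z f-\psi(z)f(z)+\psi(1)f(1)+\frac12 B_2(\{z\})f'(z)-\frac12 B_2(0)f'(1)-\frac12\int_1^z B_2(\{t\})f''(t)\,\diff t$.

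Next we extend the remainder integral to $[1,\infty)$. The constant terms $1+\frac12-\frac1{12}-\frac12\int_1^\infty B_2(\{t\})\,2t^{-3}\diff t$ are independent of $z$. Since $H(z)-\log z\to\gamma$ as $z\to\infty$ while every $z$-dependent term tends to $0$, these constants must equal $\gamma$. What remains is
$$H(z)=\log z+\gamma-\mfrac{\psi(z)}{z}-\mfrac{B_2(\{z\})}{2z^2}+\int_z^\infty B_2(\{t\})\,t^{-3}\diff t .$$

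To get \eqref{eq:harm_1}, we keep the $B_2$ boundary term and the tail integral together. This amounts to stopping at order $k=2$ with remainder written as $\int_z^\infty \psi(t)t^{-2}\diff t$ after one integration by parts. That integral equals $\bigl[\frac{B_2(\{t\})-1/6}{2t^2}\bigr]$-type terms plus $\int_z^\infty \frac{B_2(\{t\})-1/6}{t^3}\diff t$, where $B_2(\{t\})-\frac16=\{t\}^2-\{t\}\in[-\frac14,0]$. Using this antiderivative with the sharp range $[-1/4,0]$, the tail has absolute value at most $\frac1{8z^2}$. This gives the constant $\frac18$, and it is the step requiring care, namely the choice of the antiderivative $\frac12(B_2-\frac16)$, which vanishes at integers.

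For \eqref{eq:harm_2} we go one order further ($k=3$) and integrate $\int_z^\infty B_2(\{t\})t^{-3}\diff t$ by parts using $B_3(\{t\})/3$. The boundary term at $z$ is $-\frac{B_3(\{z\})}{3z^3}$ and the new integral is $\int_z^\infty B_3(\{t\})t^{-4}\diff t$. Rather than bounding these separately, we use the bound supplied by Theorem~\ref{th:em_synthèse} for the $k=3$ remainder, presumably built on $|B_3(\{t\})|\ioe \frac{\sqrt3}{36}$ or on the Fourier bound $\|B_{k}\|_\infty\ioe 2k!/(2\pi)^k$ applied to the periodic function $B_2(\{t\})$ directly. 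The latter gives $\bigl|\int_z^\infty B_2(\{t\})t^{-3}\diff t\bigr|\ioe \frac{2\cdot 2}{(2\pi)^3}\cdot 3\cdot\frac{1}{3z^3}\cdot\ldots$. Matching the stated constant $\frac1{\pi^3}$ is the main obstacle. I would first try the Fourier expansion $B_2(\{t\})=\frac{1}{\pi^2}\sum_{m\ge1}\frac{\cos 2\pi m t}{m^2}$ and integrate by parts once in each term. Each term is then bounded by $\frac{1}{\pi^2 m^2}\cdot\frac{2}{2\pi m z^3}$, and summing over $m$ gives $\frac{\zeta(3)}{\pi^3 z^3}$. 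If this is slightly too large, I would refine using the explicit remainder form given in Theorem~\ref{th:em_synthèse}.
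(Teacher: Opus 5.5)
The gap is in \eqref{eq:harm_2}. The only bound you actually carry out is the term-by-term Fourier estimate, and it gives $\bigl|\int_z^\infty B_2(\{t\})t^{-3}\diff t\bigr|\ioe \zeta(3)/(\pi^3z^3)$. Since $\zeta(3)>1$, this is strictly weaker than the claimed $1/(\pi^3z^3)$. The promised ``refinement'' is never done, so the stated constant is not established.

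The loss is exactly the factor $\zeta(3)$. Bounding each mode in absolute value amounts to using the Fourier majorant $\|B_3\|_\infty\ioe \frac{2\cdot 3!\,\zeta(3)}{(2\pi)^3}=\frac{3\zeta(3)}{2\pi^3}$ on both the boundary term and the tail. What you need instead is the sharp size of $B_3$. After your integration by parts with $B_3(\{t\})/3$, write $\frac{B_3(\{z\})}{3z^3}=B_3(\{z\})\int_z^\infty t^{-4}\diff t$. The error is then exactly
\[ \int_z^\infty B_2(\{t\})\,t^{-3}\diff t=\int_z^\infty \bigl(B_3(\{t\})-B_3(\{z\})\bigr)\,t^{-4}\diff t . \]
Since $B_3(\tfrac12+u)=u^3-\tfrac{u}{4}$, one has $\max|B_3(\{t\})|=\frac{\sqrt3}{36}$. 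Hence $|B_3(\{t\})-B_3(\{z\})|\ioe\frac{\sqrt3}{18}<\frac{3}{\pi^3}$, and the error is at most $\frac{\sqrt3}{54\,z^3}<\frac{1}{\pi^3z^3}$.

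This is precisely what Theorem~\ref{th:em_synthèse} with $k=3$ packages. Its proof uses $|B_3(\{t\})-B_3(\{x\})|\ioe\frac{4\cdot 3!}{(2\pi)^3}$, with $\eta(3)=1$ and no $\zeta(3)$. The paper's proof is just the evaluation $\rho_3\int_z^\infty|f'''(t)|\diff t=\frac{1}{2\pi^3}\cdot\frac{2}{z^3}=\frac{1}{\pi^3z^3}$ for $f(t)=1/t$, together with the boundary terms $-\frac{\psi(z)}{z}$ and $\frac12B_2(\{z\})f'(z)=-\frac{B_2(\{z\})}{2z^2}$. You cited this theorem but wrote ``presumably'' instead of computing it; doing that computation closes the gap.

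Your argument for \eqref{eq:harm_1} is correct and amounts to the paper's: $k=2$, remainder $\int_z^\infty\bigl(B_2(\{t\})-B_2(\{z\})\bigr)t^{-3}\diff t$, with $|B_2(\{t\})-B_2(\{z\})|\ioe\frac14$. You should, however, state the sign cancellation explicitly. The boundary term $-\frac{\{z\}^2-\{z\}}{2z^2}$ lies in $[0,\frac{1}{8z^2}]$, while $\int_z^\infty(\{t\}^2-\{t\})t^{-3}\diff t$ lies in $[-\frac{1}{8z^2},0]$. The triangle inequality alone would only give $\frac{1}{4z^2}$.

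Your list of constants also has sign slips: it should be $\frac12+\frac1{12}-\int_1^\infty B_2(\{t\})t^{-3}\diff t$. This is harmless, because you identify the constant with $\gamma$ by letting $z\to\infty$.
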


\begin{lem}
\label{le:1/n^2}
For all real numbers $z \geqslant 1$
$$\medop \sum_{n > z} \mfrac{1}{n^2} = \mfrac{1}{z} + \mfrac{\psi(z)}{z^2} + \mfrac{B_2(\{z\})}{z^3} + O^\star \left( \mfrac{3}{\pi^{3}z^4} \right).$$
\end{lem}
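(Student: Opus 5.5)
We apply the Euler--Maclaurin summation formula, in the form of Theorem~\ref{th:em_synthèse} with $k=3$, to $f(t)=t^{-2}$ on $\left]z,\infty\right[$. This is the same route as for the harmonic numbers above.

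Let $N\to\infty$ be an integer. Summation over $z<n\ioe N$ expresses $\sum_{z<n\ioe N} f(n)$ as the sum of four pieces:
\begin{itemize}
\item the integral $\int_z^N t^{-2}\diff t$;
\item the boundary terms $\left[-\psi(t)f(t)\right]_z^N$, $\left[\tfrac{1}{2}B_2(\{t\})f'(t)\right]_z^N$ and $\left[-\tfrac{1}{6}B_3(\{t\})f''(t)\right]_z^N$, with the signs fixed by the convention of Theorem~\ref{th:em_synthèse};
\item the remainder integral $\pm\tfrac{1}{6}\int_z^N B_3(\{t\})f'''(t)\diff t$.
\end{itemize}
All terms at $N$ tend to $0$, since $f,f',f''\to0$. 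The integral gives $1/z$. The first boundary term gives $+\psi(z)/z^2$. With $f'(t)=-2t^{-3}$, the second gives $B_2(\{z\})/z^3$. These are the three main terms of the statement.

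It remains to bound what is left, namely the $B_3$ boundary term at $z$ and the remainder integral, where $f''(t)=6t^{-4}$ and $f'''(t)=-24t^{-5}$. I would not bound these separately. A crude estimate $|B_3|\ioe \sqrt3/36$ on each piece loses the sharp constant.

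The remainder integral can also be integrated by parts once more, using $\tfrac{d}{dt}B_4(\{t\})=4B_3(\{t\})$. This turns the $B_3$ boundary term plus the integral into a combination of $B_4(\{z\})/z^4$ and an integral of $B_4(\{t\})t^{-6}$. Both can be bounded using $|B_4(\{t\})|\ioe 1/30$. Alternatively, $|B_3(\{t\})|\ioe \tfrac{2\cdot3!}{(2\pi)^3}\zeta(3)$ is available.

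Which constant comes out depends on the exact form of the error term in Theorem~\ref{th:em_synthèse}. Presumably it is written as $O^\star\bigl(\tfrac{2\zeta(k)}{(2\pi)^k}\int|f^{(k)}|\bigr)$ or similar. With $k=3$ this gives
$$\tfrac{2\zeta(3)}{(2\pi)^3}\int_z^\infty 24t^{-5}\diff t=\tfrac{12\,\zeta(3)}{(2\pi)^3z^4}.$$
That is $\tfrac{3\zeta(3)}{2\pi^3 z^4}\ioe\tfrac{3}{\pi^3z^4}$ once we use $\zeta(3)\ioe 2$ (or absorb a boundary $B_3$ term, which Theorem~\ref{th:em_synthèse} probably already includes in its $O^\star$).

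The main obstacle is this bookkeeping: matching the signs and the precise $O^\star$ constant supplied by Theorem~\ref{th:em_synthèse} so that the leftover is at most $\tfrac{3}{\pi^3z^4}$. Since the lemma is stated as an immediate consequence, I expect the constant to drop out directly from $\int_z^\infty|f'''|=6/z^4$ times the theorem's factor.
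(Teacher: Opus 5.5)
Your route is the paper's: the tail form of the appendix's effective Euler--Maclaurin theorem with $k=3$ and $f(t)=t^{-2}$. Your main terms $1/z$, $\psi(z)/z^2$ and $B_2(\{z\})/z^3$ are right. The gap is in the one step with real content, the constant $3/\pi^3$.

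Because the statement stops at $B_2$, the $j=3$ boundary term $\frac{1}{3!}B_3(\{z\})f''(z)=B_3(\{z\})/z^4$ belongs to the error. Together with the remainder integral, the leftover is exactly
\[
-4\int_z^\infty \bigl(B_3(\{t\})-B_3(\{z\})\bigr)\,t^{-5}\diff t ,
\]
which consists of two pieces, each of size $\asymp z^{-4}$.

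Your guessed factor $2\zeta(3)/(2\pi)^3$ is $\frac{1}{3!}$ times the Fourier bound $|B_3|\le 12\zeta(3)/(2\pi)^3$. So it controls the integral $\frac16\int_z^\infty B_3(\{t\})f'''(t)\diff t$ but not the boundary term, which you mention absorbing but never do. Bounding both pieces with that Fourier bound gives $3\zeta(3)/(\pi^3 z^4)$, which is too big. The appeal to $\zeta(3)\le 2$ is therefore a numerical coincidence, not a proof.

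The theorem's actual factor is $\rho_3=4/(2\pi)^3$. It comes from $\bigl|B_3(\{t\})-B_3(\{z\})\bigr|\le 4\cdot 3!/(2\pi)^3$, and then
\[
\rho_3\int_z^\infty|f'''(t)|\diff t=\frac{4}{8\pi^3}\cdot\frac{6}{z^4}=\frac{3}{\pi^3z^4}
\]
exactly. That is the paper's whole proof.

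Two of your side claims are also wrong.

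\emph{The crude estimate works.} Applying $|B_3(\{t\})|\le\sqrt3/36$ separately to the boundary term and to the integral gives $\frac{\sqrt3}{18z^4}\approx\frac{0.0962}{z^4}<\frac{3}{\pi^3z^4}\approx\frac{0.0968}{z^4}$. Since $\sqrt3/18=\max B_3-\min B_3$, nothing is lost by splitting.

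\emph{The integration by parts is misdescribed.} One more integration by parts turns the remainder into $B_4(\{z\})/z^5-5\int_z^\infty B_4(\{t\})t^{-6}\diff t$ (note $z^{-5}$, not $z^{-4}$). It also leaves $B_3(\{z\})/z^4$ untouched. With $|B_4|\le 1/30$ this yields $\frac{\sqrt3}{36z^4}+\frac{1}{15z^5}$, which exceeds $\frac{3}{\pi^3z^4}$ for $1\le z<1.37$.
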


\begin{lem}[Real Stirling]
\label{le:Stirling}
For all real numbers $z \geqslant 1$
$$\medop \sum_{n \leqslant z} \log n = z \log z - z + \tfrac{1}{2} \log 2 \pi -  \psi (z) \log z + \mfrac{B_2 \left( \{ z \} \right)}{2z}  + O^\star \left( \mfrac{1}{2\pi^{3}z^{2}} \right).$$
\end{lem}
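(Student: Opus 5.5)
The statement to prove is Lemma~\ref{le:Stirling}. I would obtain it, like the two preceding lemmas, as a direct specialization of the explicit Euler--Maclaurin formula of the appendix (Theorem~\ref{th:em_synthèse}) with $k=3$, applied to $f(t)=\log t$ on $[1,z]$. Concretely, I would write
\begin{align*}
\sum_{n \ioe z} \log n = \log 1 + \int_1^z \log t \diff t - \psi(z)\log z + \psi(1)\log 1 + \mfrac{B_2(\{z\})}{2}\,f'(z) - \mfrac{B_2(0)}{2}\,f'(1) + R_3(z),
\end{align*}
where the remainder $R_3(z)$ is an integral of $B_3(\{t\}) f'''(t)$ (up to the normalizing factor $1/3!$) plus possibly a $B_3$ boundary term, depending on the precise form in which Theorem~\ref{th:em_synthèse} is stated. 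Here $f'(z)=1/z$, and $\int_1^z\log t\diff t = z\log z - z + 1$, which produces the main terms $z\log z - z$ together with the $B_2(\{z\})/(2z)$ term.

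The constant $\tfrac12\log 2\pi$ does not come out of the finite-interval formula directly. To get it, I would split the remainder as $\int_1^\infty - \int_z^\infty$. All the terms independent of $z$ — namely $1$, $-B_2(0)/2 = -1/12$, and $\int_1^\infty B_3(\{t\})f'''(t)\diff t/6$ — then combine into a single constant $C$. I would identify $C$ by letting $z=N\to\infty$ through integers and comparing with the classical Stirling formula $\log N! = N\log N - N + \tfrac12\log N + \tfrac12\log 2\pi + o(1)$. Since $\psi(N)=-\tfrac12$ for integer $N$, the term $-\psi(N)\log N$ equals $\tfrac12\log N$, and so $C=\tfrac12\log 2\pi$. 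Alternatively, if Theorem~\ref{th:em_synthèse} is already phrased with an infinite-interval constant, this step is automatic.

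The only genuinely quantitative step is bounding the tail $\frac{1}{6}\int_z^\infty B_3(\{t\})\,f'''(t)\diff t$ with $f'''(t)=2/t^3$. The naive bound $|B_3(\{t\})|\ioe \sqrt3/36$ gives roughly $\tfrac{\sqrt3}{216 z^2}$. To reach the stated constant $1/(2\pi^3 z^2)$, I would use instead the explicit bound built into Theorem~\ref{th:em_synthèse}, presumably of the shape $\frac{2\zeta(k)}{(2\pi)^k}\int|f^{(k)}|$ or obtained via the Fourier bound for $B_3$. This gives
\[
\mfrac{2\zeta(3)}{(2\pi)^3}\cdot\mfrac{2}{2z^2}\cdot\ldots
\]
and the constants should reduce, possibly after one more integration by parts against $B_4$ with $\max|B_4(\{t\})|/4! \ioe 2\zeta(4)/(2\pi)^4$, to $\frac{1}{2\pi^3 z^2}$. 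This constant bookkeeping is where I expect the main difficulty: matching the form of the appendix theorem so that the bound comes out exactly as $1/(2\pi^3 z^2)$. Nothing deeper is involved.
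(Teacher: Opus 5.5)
Your route is the paper's. The lemma is Theorem~\ref{th:em_synthèse} with $k=3$ and $f=\log$: the hypotheses hold since $f''(t)=-t^{-2}\to 0$, $f'''(t)=2t^{-3}>0$ and $\int_1^\infty|f'''|=1$. Your identification of the constant $1+\gamma_{f,3}=\frac12\log 2\pi$, by letting $z=N\to\infty$ through the integers and comparing with the classical Stirling formula, is correct; the paper leaves that step implicit. What is missing is the only quantitative step, and your diagnosis of it is off.

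The finite formula \eqref{eq:em_fini_0} with $k=3$ produces the tail $-\frac16\int_z^\infty B_3(\{t\})\,2t^{-3}\diff t$. It also produces the boundary term $\frac{B_3(\{z\})}{6z^2}$ from $j=3$, since $f''(z)=-z^{-2}$. That boundary term is absent from the statement, so it must be absorbed into the $O^\star$; this is the real issue, not the tail.

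Your claim that the naive bound falls short is backwards. $\frac{\sqrt3}{216}\approx 0.0080$ is about half of $\frac{1}{2\pi^3}\approx 0.0161$. Even bounding both pieces naively gives $\frac{\sqrt3}{108z^2}\approx 0.01604\,z^{-2}$, just below the target.

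The paper's theorem handles this cleanly. Using $\int_z^\infty f'''=-f''(z)$, it merges the two pieces into
\[
-\frac16\int_z^\infty \bigl(B_3(\{t\})-B_3(\{z\})\bigr)\,\frac{2}{t^3}\diff t
\]
and bounds $|B_3(\{t\})-B_3(\{z\})|\ioe 24/(2\pi)^3$. This gives $\rho_3=4/(2\pi)^3=1/(2\pi^3)$, and $\int_z^\infty 2t^{-3}\diff t=z^{-2}$ then yields exactly $\frac{1}{2\pi^3z^2}$.

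The shape $2\zeta(k)/(2\pi)^k$ you guessed is not the one in the theorem: for odd $k$ it is $4\eta(k)/(2\pi)^k$ with $\eta(k)=1$. No further integration by parts against $B_4$ is needed. The theorem as stated, whose sum stops at $j=k-1$, already gives the lemma once the constant is identified.
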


\subsection{Explicit estimates for the Chowla-Walum sums}

\subsubsection{An explicit third derivative test}

\noindent
Recall that, for all $x \geqslant 1$, $\alpha \in \R_{> 1}$, $\beta \in \R_{\geqslant 0}$ and $j \in \Z_{\geqslant 1}$, we set
$$G_{\alpha,\beta,j} (x) := \sum_{n \leqslant x^{1/\alpha}} n^{\beta} B_j \left( \left\lbrace \mfrac{x}{n} \right\rbrace \right).$$

\noindent
We first state an explicit version of the third derivative test for exponential sums.

\begin{lem}
\label{le:3rd_test}
Let $A \in \Z_{\geqslant 0}$, $B \in \Z_{\geqslant 1}$, and $f \in C^3 \left[ A+1,A+B\right]$ such that there exist real numbers $0 < \lambda_3 < 1$ and $c_3 \geqslant 1$ such that, for all $x \in \left[ A+1,A+B\right]$, we have
$$\lambda_3 \leqslant \left| f^{\, \prime \prime \prime} (x) \right| \leqslant c_3 \lambda_3.$$
Then
$$\left| \sum_{A < n \leqslant A+B} e \left( f(n) \right) \right| < \left(8^{1/2} c_3^{1/2} + 2^{5/4} c_3^{1/4} \right)B \lambda_3^{1/6} + \left( 2 + 22^{1/2} c_3^{1/8} + 4 c_3^{-1/8} + 2^{3/4} c_3^{-1/4} \right) B^{1/2} \lambda_3^{-1/6}.$$
\end{lem}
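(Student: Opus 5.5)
The plan is the classical Weyl--van der Corput route: apply the Weyl differencing inequality once to the sum $S:=\sum_{A<n\leqslant A+B} e(f(n))$, reducing to sums whose phase $f(n+h)-f(n)$ has second derivative of size $h\lambda_3$. Those sums are then handled by an explicit second derivative test, and the shift parameter $H$ is optimised at the end.

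\textbf{Step 1 (Weyl--van der Corput).} I will use the explicit van der Corput inequality: for any integer $1\leqslant H\leqslant B$,
$$|S|^2\leqslant \frac{B+H-1}{H}\Bigl(B+2\sum_{h=1}^{H-1}\Bigl(1-\frac{h}{H}\Bigr)\Bigl|\sum_{A<n\leqslant A+B-h} e\bigl(f(n+h)-f(n)\bigr)\Bigr|\Bigr).$$
This follows from Cauchy--Schwarz after writing $H\,S$ as a double sum over shifts.

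\textbf{Step 2 (second derivative test).} Set $g_h(x):=f(x+h)-f(x)$. By the mean value theorem, $h\lambda_3\leqslant |g_h''(x)|\leqslant c_3 h\lambda_3$. I will invoke the explicit second derivative test stated in the appendix (derived from the mean value theorem and Kusmin--Landau), in the form
$$\Bigl|\sum e(g(n))\Bigr|\leqslant \bigl(c\,\mu\bigr)\,N\lambda_2^{1/2}+\bigl(\text{const}\bigr)\lambda_2^{-1/2},$$
with explicit constants depending on $c_3$. With $\lambda_2=h\lambda_3$ and $N\leqslant B$, summing over $h$ with the weights $1-h/H$ gives
$$\sum_{h<H}\Bigl(1-\frac{h}{H}\Bigr)\Bigl(c_3^{1/2}B(h\lambda_3)^{1/2}+K(h\lambda_3)^{-1/2}\Bigr)\ll c_3^{1/2}BH^{3/2}\lambda_3^{1/2}+KH^{1/2}\lambda_3^{-1/2}.$$
I will use explicit bounds $\sum_{h<H}(1-h/H)h^{1/2}\leqslant \tfrac{4}{15}H^{3/2}$ and $\sum_{h<H}h^{-1/2}\leqslant 2H^{1/2}$.

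\textbf{Step 3 (optimisation).} I will choose $H\approx\lambda_3^{-1/3}$, so that $B^2/H$ balances $B^2H^{1/2}\lambda_3^{1/2}$. This makes $|S|^2\ll B^2\lambda_3^{1/3}+B\lambda_3^{-1/3}$, and taking square roots, using $\sqrt{a+b}\leqslant\sqrt a+\sqrt b$, produces the two terms $B\lambda_3^{1/6}$ and $B^{1/2}\lambda_3^{-1/6}$. The fourth-root powers $c_3^{\pm1/8}$ and $c_3^{\pm1/4}$ suggest the choice $H=\lceil c_3^{-1/4}\lambda_3^{-1/3}\rceil$ (or similar).

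\textbf{Main obstacle.} The delicate part is the case analysis on $H$. If $\lambda_3^{-1/3}>B$, then $H\leqslant B$ fails and I will use the trivial bound $|S|\leqslant B\leqslant B^{1/2}\lambda_3^{-1/6}$; this case explains the constant $2$. Otherwise I must handle rounding, via $H\leqslant c_3^{-1/4}\lambda_3^{-1/3}+1$ and the factor $(B+H-1)/H\leqslant 2B/H$. I will then track the constants so that they land exactly on $8^{1/2}c_3^{1/2}+2^{5/4}c_3^{1/4}$ and $2+22^{1/2}c_3^{1/8}+4c_3^{-1/8}+2^{3/4}c_3^{-1/4}$. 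I expect to adjust the choice of $H$ by trial against these target constants.
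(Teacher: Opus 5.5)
Your strategy (one Weyl--van der Corput differencing, an explicit second derivative test for $g_h(x)=f(x+h)-f(x)$, then $H\asymp\lambda_3^{-1/3}$) is sound, and it differs from the paper's proof, which does none of this by hand. The paper quotes Patel's explicit derivative test \cite[Lemma~2.12]{pat21} in the form $|S|^2\leqslant (B\lambda_3^{1/3}+\varepsilon_3)(\alpha_3 B+\beta_3\lambda_3^{-2/3})$. It absorbs $\varepsilon_3$ using $B\lambda_3^{1/3}\geqslant 1$ (the trivial case is $B\lambda_3^{1/3}<1$), then chooses $\varepsilon_3=(2c_3)^{-1/2}$ with $\alpha_3<8c_3$ and $\beta_3<22c_3^{1/4}+4$ from \cite[Remark~2.13]{pat21}, and finally splits the square root. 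The stated constants are simply $\sqrt{(1+\varepsilon_3)\alpha_3}$ and $\sqrt{(1+\varepsilon_3)\beta_3}$ expanded term by term, using $(11\sqrt2)^{1/2}<4$. Your route is self-contained, but this lemma's whole content is its explicit constants, and your proposal stops exactly where that work begins. As written it only yields $|S|\ll_{c_3}B\lambda_3^{1/6}+B^{1/2}\lambda_3^{-1/6}$, which is a genuine gap.

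Concretely, two things are missing.

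First, the explicit second derivative test you plan to take ``from the appendix'' is not there; the appendix only contains Ostrowski's mean value theorem and Euler--Maclaurin formulas. You must import one (for instance \cite[Lemma~8]{bor26}, which the paper uses in Proposition~\ref{pro:C_W}) with its exact constants and lower-order terms. It must be valid on the non-dyadic ranges $A<n\leqslant A+B-h$ with ratio $c_2=c_3$.

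Second, ``tracking constants so that they land exactly'' and ``adjusting $H$ by trial'' is not an argument. Your computation will not reproduce these particular expressions, which are artefacts of Patel's lemma. What must be proved is that your constants are dominated by them for every $c_3\geqslant 1$. Also, the $2$ does not come from the trivial case: if $B\lambda_3^{1/3}<1$ then $|S|\leqslant B<B^{1/2}\lambda_3^{-1/6}$ with constant $1$. In the paper it is $\sqrt4$, coming from $\beta_3$.

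The gap is fillable along your lines. Suppose the imported test reads $\bigl|\sum_{a<n\leqslant a+N} e(g(n))\bigr|\leqslant a_0c_2N\lambda_2^{1/2}+b_0\lambda_2^{-1/2}$. Take $H=\lceil c_3^{-1/4}\lambda_3^{-1/3}\rceil$, use $(B+H-1)/H\leqslant 2B/H$ and your two sum bounds. When $1\leqslant c_3^{-1/4}\lambda_3^{-1/3}\leqslant B$ this gives
\[
|S|^2\leqslant\Bigl(2c_3^{1/4}+\tfrac{16\sqrt2}{15}\,a_0c_3^{7/8}\Bigr)B^2\lambda_3^{1/3}+8b_0c_3^{1/8}B\lambda_3^{-1/3},
\]
which is below the square of the stated bound for all $c_3\geqslant1$ provided roughly $a_0\leqslant 13$ and $b_0\leqslant 5$. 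The cases $c_3^{-1/4}\lambda_3^{-1/3}<1$ and $c_3^{-1/4}\lambda_3^{-1/3}>B$ follow from $|S|\leqslant B$. What the proof requires is carrying out exactly this comparison, with the true constants (and any extra terms) of the test you import.
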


\begin{proof}
One may suppose that $B\lambda_3^{1/3} \geqslant 1$ for, otherwise, $B^{1/2} \lambda_3^{-1/6} > B$. We apply \cite[Lemma~2.12]{pat21} with $\left( W_3,\lambda_3,\eta_3,N,L\right) \leftrightarrow \left( \lambda_3^{-1},c_3,\varepsilon_3,A,B\right) $, yielding
\begin{align*}
   \left| \sum_{A < n \leqslant A+B} e \left( f(n) \right) \right|^2 & \leqslant \left( B \lambda_3^{1/3} + \varepsilon_3 \right) \left( \alpha_3 B + \beta_3 \lambda_3^{-2/3} \right) \\
   & \leqslant B \lambda_3^{1/3} \left( 1 + \varepsilon_3 \right) \left( \alpha_3 B + \beta_3 \lambda_3^{-2/3} \right)
\end{align*}
since $B\lambda_3^{1/3} \geqslant 1$. We then majorize $\alpha_3$ and $\beta_3$ with the help of \cite[Remark~2.13]{pat21} and choose $\varepsilon_3 = (2c_3)^{-1/2}$, which allows us to obtain $\alpha_3 < 8 c_3$ and $\beta_3 < 22 c_3^{1/4}+4$. Reporting above, we get
$$\left| \sum_{A < n \leqslant A+B} e \left( f(n) \right) \right|^2 \leqslant \left( 8c_3 + 4 \sqrt{2c_3} \right) B^2 \lambda_3^{1/3} + \left(4 + 22 c_3^{1/4} + 11 \sqrt{2} c_3^{-1/4} + 2 \sqrt{2} c_3^{-1/2}\right) B \lambda_3^{-1/3} $$
giving the asserted result.
\end{proof}

\subsubsection{Application to the Chowla-Walum sums}

\noindent
We are now in a position to prove the main estimate of this section, which can be regarded as an explicit version of \cite[Theorem~2]{kane85}.

\begin{prop}
\label{pro:C_W}
Let $\alpha > 1$, $\beta \in \R_{\geqslant 0}$ and $j \in \Z_{\geqslant 2}$. Set $\Gamma_j := \mfrac{2 \eta(j)j!}{(2\pi)^{j}}$ where $\eta(j) = \zeta(j)$ if $j$ is even, $\eta(j) = 1$ otherwise. Then, for all $x \geqslant 1$, we have
$$\left| G_{\alpha,\beta,j} (x) \right| \leqslant \mfrac{2^{\beta + 5}}{\sqrt{\pi}}\Gamma_j  \Biggl( \Bigl( \zeta \left( j - \tfrac{1}{2}  \right) + \delta \Bigr) x^{\theta(\alpha,\beta)} + \zeta \left( j + \tfrac{1}{2} \right) x^{\frac{\beta}{\alpha} + \frac{3}{2\alpha} - \frac{1}{2}} \Biggr) \left(  \mfrac{c\log x}{\log 2} + 1 \right) $$
where
\begin{enumerate}
   \item[\tri] $\theta(\alpha,\beta) := \begin{cases} \frac{\beta}{\alpha} - \frac{1}{2\alpha} + \frac{1}{2} , & \mathrm{if} \ \beta \geqslant \frac{1}{2} \ \mathrm{and} \ 1 < \alpha < 3 \ \, ; \\ & \\ \frac{2 \beta}{5} + \frac{3}{10} , & \mathrm{if} \ 0 \leqslant \beta \leqslant \frac{1}{2} \ \mathrm{and} \ 1 < \alpha \leqslant \frac{5}{2} \, ; \end{cases}$
   \item[\tri] $ \left( \delta,c \right)  :=  \begin{cases} \left( \frac{\sqrt{\pi}}{2^{\beta+5}}, \mfrac{3-\alpha}{2\alpha(\beta+1)} \right) , & \mathrm{if} \ \beta \geqslant \frac{1}{2} \, ; \\ & \\ \Biggl( 2 \left( \zeta \left( j - \frac{1}{6}\right) + \zeta \left( j + \frac{1}{6}\right) \right) , \mfrac{1}{\alpha} \Biggr)  , & \mathrm{if} \ 0 \leqslant \beta \leqslant \frac{1}{2}. \end{cases}$
\end{enumerate}
\end{prop}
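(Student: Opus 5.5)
We use the absolutely convergent Fourier expansion of $B_j(\{t\})$ for $j \geqslant 2$,
$$B_j(\{t\}) = -\mfrac{j!}{(2\pi i)^j} \sum_{h \neq 0} \mfrac{e(ht)}{h^j}.$$
Grouping $\pm h$, this gives $\left| \sum_{n\in I} n^{\beta} B_j(\{x/n\}) \right| \leqslant \mfrac{2 j!}{(2\pi)^j} \sum_{h\geqslant 1} h^{-j} \left| \sum_{n \in I} n^\beta e(hx/n) \right|$. The constant $\Gamma_j$ absorbs $\eta(j)$ wherever the $h$-sum is bounded trivially.

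\textbf{Dyadic splitting.} Split $[1,x^{1/\alpha}]$ into dyadic blocks $N < n \leqslant 2N$. This produces the factor $\left( \frac{c\log x}{\log 2}+1 \right)$, once we know that only blocks with $N$ beyond some threshold $N_0$ need the nontrivial treatment. The remaining $n \leqslant N_0$ are bounded trivially by $\Gamma_j \sum_{n\leqslant N_0} n^\beta \ll N_0^{\beta+1}$. So $c$ is the length of the dyadic range $\log(x^{1/\alpha}/N_0)/\log x$.

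\textbf{Removing the weight and choosing the derivative test.} On each block, remove $n^\beta$ by partial summation, at a cost of a factor $\leqslant 2 (2N)^{\beta} \leqslant 2^{\beta+1}N^\beta$. We then need bounds for $\sum_{N<n\leqslant N'} e(hx/n)$ with $f(t)=hx/t$, which choice depending on $\beta$:
\begin{enumerate}
\item[\tri] For $\beta\geqslant\frac12$, use the second derivative test. Here $|f''|\asymp hx/N^3$, so the block sum is $\ll N(hx/N^3)^{1/2} + (N^3/(hx))^{1/2}$. Weighted by $h^{-j}$, the first term gives $\zeta(j-\frac12)(xN^{-1})^{1/2}N^{\beta+1}$; summing geometrically up to $N=x^{1/\alpha}$ yields $x^{\beta/\alpha-1/(2\alpha)+1/2}$. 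The second term gives $\zeta(j+\frac12)N^{\beta+3/2}x^{-1/2}$, which yields $x^{\beta/\alpha+3/(2\alpha)-1/2}$. I would take this test in its standard explicit Kusmin--Landau/van der Corput form rather than Lemma~\ref{le:3rd_test}. Small blocks are handled trivially with threshold $N_0=x^{1/2}$-ish (balanced so that $N_0^{\beta+1}\leqslant x^{\theta}$), which is the source of the $\delta=\sqrt\pi/2^{\beta+5}$ term and of $c=\frac{3-\alpha}{2\alpha(\beta+1)}$. The condition $\alpha<3$ ensures that the first term dominates the sum, with a geometric sum that does not blow up.
\item[\tri] For $\beta\leqslant\frac12$, use Lemma~\ref{le:3rd_test} with $\lambda_3=6hx/(2N)^4$ and $c_3=16$. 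The main term $N\lambda_3^{1/6}h^{-j}$ gives $\zeta(j-\frac16)x^{1/6}N^{1/3+\beta}$, and the secondary term gives $\zeta(j+\frac16)N^{1/2+2/3+\beta}x^{-1/6}$; this is the origin of the $\delta$ with $\zeta(j\pm\frac16)$. Interpolating with the second derivative terms by taking, on each block, the minimum of the two tests, and optimizing at $N\asymp x^{3/10}$-type crossover (for $\beta\leqslant\frac12$ the exponent $\frac{2\beta}{5}+\frac{3}{10}$ arises from balancing $x^{1/6}N^{\beta+1/3}$ against $N^{\beta+1}$ trivial, or against the $(x/N)^{1/2}N^{\beta+1}$ term), each block is bounded by $x^{2\beta/5+3/10}$ times constants. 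Here $\alpha\leqslant 5/2$ guarantees that $N\leqslant x^{2/5}$ stays in the range where this is valid. Every block is then bounded uniformly, so we multiply by the number of blocks, $\frac{\log x}{\alpha\log 2}+1$, which gives $c=1/\alpha$.
\end{enumerate}

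\textbf{Main obstacle.} The main obstacle is the explicit bookkeeping. The constants $2^{\beta+5}/\sqrt\pi$ come from bounding constants like $8^{1/2}c_3^{1/2}+2^{5/4}c_3^{1/4}$ and the partial-summation factor uniformly, and the threshold choices must make every case fit under the single displayed shape. I would first fix $N_0$ in each case and verify the trivial part against $\delta x^\theta$, then check the constants numerically.
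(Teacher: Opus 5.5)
Your architecture matches the paper's. For $0\leqslant\beta\leqslant\frac12$ the paper expands $B_j$ in Fourier series and removes $n^{\beta}$ by partial summation (factor $2^{\beta+1}\Gamma_j N^{\beta}$). On the dyadic blocks $N\leqslant T$ it applies Lemma~\ref{le:3rd_test} with $\lambda_3=3hx/(8N^4)$, $c_3=16$. On the blocks $T<N\leqslant x^{1/\alpha}$ it uses the explicit second derivative test with $\lambda_2=hx/(4N^3)$, $c_2=8$. It then multiplies by the number $\frac{\log x}{\alpha\log 2}+1$ of blocks, which gives $c=1/\alpha$.

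The case $\beta\geqslant\frac12$ is simply quoted from \cite{bor26}. Your reconstruction of it is consistent: a trivial bound for $n\leqslant N_0$ with $N_0^{\beta+1}=x^{\theta}$, then the second derivative test above $N_0$. This gives exactly $\delta=\sqrt{\pi}/2^{\beta+5}$ and $\log(x^{1/\alpha}/N_0)/\log x=\frac{3-\alpha}{2\alpha(\beta+1)}$.

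However, the one computation that actually produces $\theta=\frac{2\beta}{5}+\frac{3}{10}$ is garbled in your sketch, and taken literally it does not give that exponent.

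\begin{itemize}
\item After removing the weight, the second-derivative contribution of a block is $x^{1/2}N^{\beta-1/2}$, not $(x/N)^{1/2}N^{\beta+1}$. Your final exponent for $\beta\geqslant\frac12$ is nevertheless right.
\item The cut must be $T=x^{2/5}$, not $x^{3/10}$. At $T=x^{2/5}$ three terms all equal $x^{2\beta/5+3/10}$: $x^{1/6}T^{\beta+1/3}$ and $x^{-1/6}T^{\beta+7/6}$, both maximal at $N=T$ on the small blocks, and $x^{1/2}T^{\beta-1/2}$. The last one is the maximum over the large blocks precisely because $\beta\leqslant\frac12$ gives $N^{\beta-1/2}\leqslant T^{\beta-1/2}$.
\item The condition $\alpha\leqslant\frac52$ is exactly the admissibility requirement $T\leqslant x^{1/\alpha}$. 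Cutting at $x^{3/10}$ only yields $x^{3\beta/10+7/20}$, which is worse whenever $\beta<\frac12$.
\item No trivial range is used in this case, which is why $c=1/\alpha$. Balancing against $N^{\beta+1}$ is therefore beside the point; it would give $N=x^{1/4}$.
\item It is not true that every block is $O\bigl(x^{2\beta/5+3/10}\bigr)$. The secondary term $x^{-1/2}N^{\beta+3/2}$ of the second derivative test, taken at $N=x^{1/\alpha}$, is what produces the separate term $\zeta\bigl(j+\frac12\bigr)x^{\frac{\beta}{\alpha}+\frac{3}{2\alpha}-\frac12}$ in the statement.
\end{itemize}

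With these corrections the argument is the paper's. The constants $13.65\,\zeta\bigl(j-\frac16\bigr)$ and $14.5\,\zeta\bigl(j+\frac16\bigr)$ coming from Lemma~\ref{le:3rd_test} satisfy $13.65<32/\sqrt{\pi}$ and $14.5<32/\sqrt{\pi}$. So they fit under $\frac{16}{\sqrt{\pi}}\cdot 2\bigl(\zeta(j-\frac16)+\zeta(j+\frac16)\bigr)$, which is how $\delta$ arises.
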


\begin{proof}
The case $\beta \geqslant \frac{1}{2}$ is \cite[Proposition~1]{bor26}. Now suppose $\beta \in \left[ 0, \frac{1}{2} \right]$ and $ 1 < \alpha \leqslant \frac{5}{2} $. Let $T \in \left[ 1,x^{1/\alpha}\right] $ be a parameter at our disposal, and write

$$G_{\alpha,\beta,j} (x) = \left( \sum_{n \leqslant T} + \sum_{T < n \leqslant x^{1/\alpha}} \right) n^{\beta} B_j \left( \left\lbrace \mfrac{x}{n} \right\rbrace \right)$$
so that
$$L(x)^{-1} \left| G_{\alpha,\beta,j} (x) \right|  \leqslant \max_{N \leqslant T} \left | G_{N,\beta,j}(x) \right | + \max_{T < N \leqslant x^{1/\alpha}} \left | G_{N,\beta,j}(x) \right |$$
where $L(x) := \frac{\log x}{\alpha \log 2} + 1$ and
$$G_{N,\beta,j}(x) := \sum_{N < n \leqslant 2N} n^{\beta} B_j \left( \left\lbrace \mfrac{x}{n} \right\rbrace \right).$$
As it was observed in \cite{bor26}, we have
$$\left | G_{N,\beta,j}(x) \right | \leqslant 2^{\beta+1} \Gamma_{j} N^{\beta} \, \sum_{m \geqslant 1} m^{-j} \max_{N \leqslant N_1 \leqslant 2N} \left| \sum_{N < n \leqslant N_1} e \left( \mfrac{mx}{n}\right) \right|$$
so that
\begin{align}
   L(x)^{-1} \left| G_{\alpha,\beta,j} (x) \right|  & \leqslant 2^{\beta+1} \Gamma_{j} \left\lbrace  \max_{N \leqslant T} \left( N^{\beta} \, \sum_{m \geqslant 1} m^{-j} \max_{N \leqslant N_1 \leqslant 2N} \left| \sum_{N < n \leqslant N_1} e \left( \mfrac{mx}{n}\right) \right| \right) \right. \notag \\
& \hspace*{1cm} \left. + \max_{T < N \leqslant x^{1/\alpha}} \left( N^{\beta} \, \sum_{m \geqslant 1} m^{-j} \max_{N \leqslant N_1 \leqslant 2N} \left| \sum_{N < n \leqslant N_1} e \left( \mfrac{mx}{n}\right) \right| \right) \right\rbrace \label{eq:C_W_1}
\end{align}
We now use Lemma~\ref{le:3rd_test} with $A=N$, $B=N_1-N \leqslant N$, $\lambda_3 = \frac{3mx}{8N^{4}}$ and $c_3 = 16$ for the first sum, and \cite[Lemma~8]{bor26} with $\lambda_2 = \frac{mx}{4N^3}$ and $c_2 = 8$ for the second one, which yields
\begin{align}
   & \max_{N \leqslant T} \left( N^{\beta} \, \sum_{m \geqslant 1} m^{-j} \max_{N \leqslant N_1 \leqslant 2N} \left| \sum_{N < n \leqslant N_1} e \left( \mfrac{mx}{n}\right) \right| \right) \notag \\
   & \hspace*{1cm} < \max_{N \leqslant T} \sum_{m \geqslant 1} m^{-j} \left( \np{13.65} (mx)^{1/6} N^{\beta + 1/3} + \np{14.5} (mx)^{-1/6} N^{\beta + 7/6} \right) \notag \\
   & \hspace*{2cm} \leqslant \np{13.65} \zeta \left( j - \mfrac{1}{6}\right)  x^{1/6} T^{\beta + 1/3} + \np{14.5} \zeta \left( j + \mfrac{1}{6}\right) x^{-1/6} T^{\beta + 7/6} \label{eq:C_W_2}
\end{align}
and
\begin{align}
   & \max_{T < N \leqslant x^{1/\alpha}} \left( N^{\beta} \, \sum_{m \geqslant 1} m^{-j} \max_{N \leqslant N_1 \leqslant 2N} \left| \sum_{N < n \leqslant N_1} e \left( \mfrac{mx}{n}\right) \right| \right) \notag \\
   & \hspace*{1cm} < \mfrac{16}{\sqrt{\pi}} \, \max_{T < N \leqslant x^{1/\alpha}} \sum_{m \geqslant 1} m^{-j} \left( (mx)^{1/2} N^{\beta - 1/2} +  (mx)^{-1/2} N^{\beta + 3/2} \right) \notag \\
   & \hspace*{2cm} \leqslant \mfrac{16}{\sqrt{\pi}} \left(  \zeta \left( j - \mfrac{1}{2}\right)  x^{1/2} T^{\beta - 1/2} +  \zeta \left( j + \mfrac{1}{2}\right) x^{\frac{\beta}{\alpha} + \frac{3}{2 \alpha} - \frac{1}{2}} \right).  \label{eq:C_W_3}
\end{align}
Reporting \eqref{eq:C_W_2} and \eqref{eq:C_W_3} in \eqref{eq:C_W_1} and choosing $T = x^{2/5}$ yield
$$L(x)^{-1} \left| G_{\alpha,\beta,j} (x) \right|  \leqslant 2^{\beta+1} \Gamma_{j} \left( C_j x^{\frac{2 \beta}{5} + \frac{3}{10}} + \mfrac{16}{\sqrt{\pi}} \zeta \left( j + \mfrac{1}{2}\right) x^{\frac{\beta}{\alpha} + \frac{3}{2 \alpha} - \frac{1}{2}} \right)$$
with
\begin{align*}
   C_j &:= \mfrac{16}{\sqrt{\pi}} \left( \mfrac{\np{13.65} \sqrt{ \pi}}{16} \zeta \left( j - \mfrac{1}{6}\right) + \mfrac{\np{14.5} \sqrt{ \pi}}{16} \zeta \left( j + \mfrac{1}{6}\right) + \zeta \left( j - \mfrac{1}{2}\right) \right) \\
   & < \mfrac{16}{\sqrt{\pi}} \left( 2 \zeta \left( j - \mfrac{1}{6}\right) + 2 \zeta \left( j + \mfrac{1}{6}\right) + \zeta \left( j - \mfrac{1}{2}\right) \right)
\end{align*}
as required.
\end{proof}

\begin{rem}
The case used in this paper is $\alpha = j = 2$ and $\beta \in \{0,2\}$. Since $\Gamma_2 = \frac{1}{6}$, we get for all $x \geqslant 1$
\begin{align}
   & \left| G_{2,0,2} (x) \right| < \left( \np{17.94} x^{3/10} + \np{4.04} x^{1/4} \right) \left( \mfrac{\log x}{\log 4} + 1 \right),  \notag \\
   & \left| G_{2,2,2} (x) \right| < \np{47.76} x^{5/4} \left( \mfrac{\log x}{\log 256} + 1 \right). \label{eq:borne_G}
\end{align}
\end{rem}

\subsubsection{Chowla-Walum sums and exponent pairs}

\noindent
When $0 \leqslant \beta \leqslant \frac{1}{2}$, the exponent $\frac{1}{10} \left( 4 \beta + 3 \right) $ was discovered by Kanemitsu and Sitaramachandrarao in \cite{kane85}, and improves the previous bound $ \frac{1}{3} (\beta + 1) $ obtained by the sole application of \cite[Lemma~8]{bor26}. To our knowledge, this is currently the best result that can be achieved in the explicit estimates setting. But this exponent is far from the conjectured value $\frac{\beta}{2} + \frac{1}{4}$ surmised by Chowla and Walum in \cite{chow65}. Nowadays, the improvements in this problem are direct consequences of improvements in exponent pairs. Such results have already been proved in the literature in the case $\alpha = 2$, see for instance \cite[Theorem~2]{zhai13}. We will prove here an alternate bound for the Chowla-Walum sums.

\begin{lem}
\label{le:C-W_exp_pairs}
Let $\alpha \in \R_{> 1}$, $\beta \in \R_{\geqslant 0}$ and $j \in \Z_{\geqslant 2}$. Let $(k,\ell) \ne (0,1)$ be an exponent pair. Then, for $x \geqslant e$ sufficiently large
$$(\log x)^{-1} G_{\alpha,\beta,j} (x) \ll_{\alpha,\beta,j} \ \begin{cases} x^{\frac{\beta}{\alpha}+\frac{k(\alpha-2)+\ell}{\alpha}}, & \mathrm{if} \ \beta + \ell - 2k \geqslant 0 \, \\ & \\ x^{\frac{k(\beta+1)}{2k+1-\ell}} + x^{k+\frac{\beta+\ell-2k}{\alpha}} + x^{\frac{\beta+2}{\alpha}-1}, & \mathrm{if} \ \beta + \ell - 2k \leqslant 0. \end{cases}$$
\end{lem}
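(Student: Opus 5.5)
The plan is to repeat the dyadic Fourier argument from the proof of Proposition~\ref{pro:C_W}, but to estimate each dyadic exponential sum with a general exponent pair $(k,\ell)$ instead of the second or third derivative test. Split $n \leqslant x^{1/\alpha}$ into $O(\log x)$ dyadic blocks $N < n \leqslant 2N$ with $N \leqslant x^{1/\alpha}$. The number of blocks accounts for the factor $(\log x)^{-1}$ on the left. As observed in \cite{bor26} and recalled in the proof of Proposition~\ref{pro:C_W}, since $j \geqslant 2$ the Fourier series of $B_j(\{t\})$ converges absolutely. This gives
$$\left| G_{N,\beta,j}(x) \right| \ll_{\beta,j} N^{\beta} \sum_{m \geqslant 1} m^{-j} \max_{N \leqslant N_1 \leqslant 2N} \left| \sum_{N < n \leqslant N_1} e\left( \mfrac{mx}{n} \right) \right|.$$

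\textbf{Bound for one block.} For $f(t) = mx/t$ on $[N,2N]$ we have $|f'(t)| \asymp \lambda := mx/N^2$, and the higher derivatives behave as in the definition of exponent pairs. In the form that includes the Kusmin--Landau term, the exponent pair $(k,\ell)$ gives
$$\sum_{N < n \leqslant N_1} e\left( \mfrac{mx}{n} \right) \ll \lambda^{k} N^{\ell} + \lambda^{-1} = (mx)^{k} N^{\ell-2k} + \mfrac{N^{2}}{mx}.$$
Since $k \leqslant \frac12$ and $j \geqslant 2$, the series $\sum_m m^{k-j}$ and $\sum_m m^{-1-j}$ converge. Together with the trivial bound this yields
$$G_{N,\beta,j}(x) \ll \min\Bigl( N^{\beta+1} ,\; x^{k} N^{\beta+\ell-2k} + x^{-1} N^{\beta+2} \Bigr).$$
The proof then reduces to taking the maximum over $N \leqslant x^{1/\alpha}$.

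\textbf{Case $\beta+\ell-2k \geqslant 0$.} The main term increases with $N$, so it is maximal at $N = x^{1/\alpha}$. This gives $x^{k + (\beta+\ell-2k)/\alpha} = x^{\frac{\beta}{\alpha} + \frac{k(\alpha-2)+\ell}{\alpha}}$. The secondary term gives $x^{(\beta+2)/\alpha - 1}$. This is smaller when $\alpha \geqslant 2$, since then $\lambda \geqslant 1$ throughout and the $\lambda^{-1}$ term is dominated. When $\alpha < 2$ one has to check that it is still absorbed, or else keep it as in the second case. I expect this bookkeeping, making sure the $\lambda^{-1}$ contribution is dominated in the first case for all admissible $(\alpha,k,\ell)$, to be the only delicate point. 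I would first try to use the trivial bound on the range $N > \sqrt{x}$, where $\lambda < 1$ can occur.

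\textbf{Case $\beta+\ell-2k \leqslant 0$.} Now the first term decreases in $N$, so I balance it against the trivial bound. Setting $N^{\beta+1} = x^{k} N^{\beta+\ell-2k}$ gives the threshold $N_0 = x^{k/(2k+1-\ell)}$; the denominator is positive because $(k,\ell) \neq (0,1)$.
\begin{itemize}
\item If $N_0 \leqslant x^{1/\alpha}$, I use the trivial bound for $N \leqslant N_0$ and the exponent pair bound for $N > N_0$. Both are then $\ll x^{k(\beta+1)/(2k+1-\ell)}$.
\item If $N_0 > x^{1/\alpha}$, then on all blocks the trivial bound is at most $x^{k} N^{\beta+\ell-2k}$ evaluated at $N = x^{1/\alpha}$, which is $x^{k + (\beta+\ell-2k)/\alpha}$.
\end{itemize}
In both sub-cases the $\lambda^{-1}$ term is largest at $N = x^{1/\alpha}$ and contributes $x^{(\beta+2)/\alpha - 1}$. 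Summing the three contributions over the $O(\log x)$ dyadic blocks gives the second line of the statement.
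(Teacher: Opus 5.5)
\textbf{Second case.} Your argument here is correct and is the paper's argument. The paper bounds $G_{\alpha,\beta,j}(x) \ll T^{\beta+1} + \bigl(x^{k}T^{\beta+\ell-2k} + x^{(\beta+2)/\alpha-1}\bigr)\log x$ and invokes Srinivasan's optimisation lemma. Your balancing at $N_0 = x^{k/(2k+1-\ell)}$, with the two sub-cases $N_0 \le x^{1/\alpha}$ and $N_0 > x^{1/\alpha}$, is that lemma done by hand.

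\textbf{First case.} The paper gives no argument; it simply quotes \cite[Theorem~2.2]{bor22}. Your argument proves the stated bound only when $x^{(\beta+2)/\alpha-1} \le x^{k+(\beta+\ell-2k)/\alpha}$. This is equivalent to $(k+1)(\alpha-2)+\ell \ge 0$, which holds in particular for $\alpha \ge 2$.

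\textbf{The gap for $1<\alpha<2$.} What you call delicate bookkeeping is a genuine gap, and it cannot be closed.

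Your proposed fix does not work. The trivial bound on the blocks $\sqrt{x} < N \le x^{1/\alpha}$ gives $x^{(\beta+1)/\alpha}$. The target exponent minus $(\beta+1)/\alpha$ equals $(k(\alpha-2)+\ell-1)/\alpha$, which is $<0$ for $\alpha<2$ since $(k,\ell)\neq(0,1)$. So this is worse than what you need.

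More fundamentally, the $\lambda^{-1}$ contribution is not an artefact. For $n>\sqrt{x}$ the phase $x/n$ moves by less than $1$ per step. Up to lower-order terms, the tail $\sum_{\sqrt{x}<n\le x^{1/\alpha}} n^{\beta}B_j(\{x/n\})$ is therefore $\int t^{\beta}B_j(\{x/t\})\,dt$. The sharp cutoff at $t = x^{1/\alpha}$ leaves an incomplete period. For $j=2$, substituting $u = x/t$ and integrating by parts shows this produces the term
\[
-\tfrac13\, x^{(\beta+2)/\alpha-1}B_3\bigl(\{x^{1-1/\alpha}\}\bigr)
\]
plus smaller terms.

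\textbf{A concrete counterexample.} Take $\alpha = \tfrac32$, $\beta = 1$, $j=2$ and $(k,\ell) = (\tfrac12,\tfrac12)$. The first case asserts $G_{\alpha,\beta,2}(x) \ll x^{5/6}\log x$. The other contributions are smaller:
\begin{itemize}
\item the range $n \le \sqrt{x}$ is $O(x^{3/4}\log x)$ by Proposition~\ref{pro:C_W};
\item the stationary-phase terms of the tail are $O(x^{11/12})$.
\end{itemize}
Hence $G_{\alpha,\beta,2}(x) = -\tfrac13\, x\, B_3(\{x^{1/3}\}) + O(x^{11/12})$. This is $\gg x$ when, for example, $\{x^{1/3}\} = \tfrac14$.

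The statement itself already signals this. On the common boundary $\beta+\ell = 2k$ its two cases differ exactly by $x^{(\beta+2)/\alpha-1}$, and the paper's next lemma (with $\beta=0$) keeps the term $x^{2/\alpha-1}$.

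So when $(k+1)(\alpha-2)+\ell < 0$, the first case as written is false, and no argument can establish it. It must either carry the extra term $x^{(\beta+2)/\alpha-1}$, which is exactly what your proof yields, or be restricted in $\alpha$.
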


\begin{rem}
It is not difficult to see that if $\alpha \geqslant 3$ and $k + \ell \geqslant 1$, then the trivial bound $x^{\frac{\beta+1}{\alpha}}$ is better than the two majorizations above.
\end{rem}

\begin{proof}
The case $ \beta + \ell - 2k \geqslant 0 $ is \cite[Theorem~2.2]{bor22}, so that we only treat the case $ \beta + \ell - 2k \leqslant 0 $. Note that, since $k \leqslant \frac{1}{2} \leqslant \ell$, we have $\beta \leqslant 2k - \ell \leqslant \frac{1}{2}$. As in the proof of Proposition~\ref{pro:C_W}, partial summation and the use of exponent pairs yield
\begin{align*}
   G_{\alpha,\beta,j} (x) & \ll_{\alpha,\beta,j} \ T^{\beta + 1} + \max_{T < N \leqslant x^{1/\alpha}} \Bigl( x^{k} N^{\beta + \ell - 2k} + x^{-1} N^{2+\beta} \Bigr) \log x \\
   & \ll _{\alpha,\beta,j} \ T^{\beta + 1} + \Bigl( x^{k} T^{\beta + \ell - 2k} + x^{\frac{\beta+2}{\alpha}-1} \Bigr) \log x
\end{align*}
with $1 \leqslant T \leqslant x^{1/\alpha}$, and where we used the condition $ \beta + \ell - 2k \leqslant 0 $. We then complete the proof using the Srinivasan optimisation lemma. See \cite[Lemma~5.6 p. 363]{bor20}, for instance.
\end{proof}

\noindent
We now focus on the case $\beta = 0$. By Lemma~\ref{le:C-W_exp_pairs}, we can see that it is in our best interest to choose an exponent pair $(k,\ell)$ satisfying $2k = \ell$. Now, note that if $\bigl(k,k+\frac{1}{2} \bigr)$ is an exponent pair, then the exponent pair $BA \bigl(k,k+\frac{1}{2} \bigr) = \left( \frac{2k+1}{4(k+1)}, \frac{4k+2}{4(k+1)}\right) $ satisfies this condition. We then infer  the following consequence.

\begin{lem}
Let $\alpha \in \R_{> 1}$ and $j \in \Z_{\geqslant 2}$. If $\bigl(k,k+\frac{1}{2} \bigr)$ is an exponent pair, then, for all $x \geqslant e$ sufficiently large, we have
$$G_{\alpha,0,j}(x) \ll \left( x^{\frac{2k+1}{4(k+1)}} + x^{\frac{2}{\alpha}-1} \right) \log x.$$
\end{lem}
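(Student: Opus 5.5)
We apply Lemma~\ref{le:C-W_exp_pairs} with $\beta = 0$ and the exponent pair
$$(k',\ell') := BA\bigl(k,k+\tfrac12\bigr) = \left( \mfrac{2k+1}{4(k+1)}, \mfrac{2k+1}{2(k+1)} \right).$$

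\textbf{Step 1: this is an exponent pair.} Applying the $A$-process to $\bigl(k,k+\frac12\bigr)$ gives
$$\left(\mfrac{k}{2k+2},\mfrac{k+1/2+1}{2k+2}\right) = \left(\mfrac{k}{2(k+1)},\mfrac{2k+3}{4(k+1)}\right).$$
The $B$-process $(a,b)\mapsto (b-\frac12,a+\frac12)$ then yields
$$\left(\mfrac{2k+3-2k-2}{4(k+1)}, \mfrac{k+k+1}{2(k+1)}\right) = \left(\mfrac{1}{4(k+1)},\mfrac{2k+1}{2(k+1)}\right).$$
This is not what the paper wrote. Since the paper asserts $BA\bigl(k,k+\frac12\bigr) = \bigl(\frac{2k+1}{4(k+1)},\frac{4k+2}{4(k+1)}\bigr)$, I expect either a different ordering convention (e.g. $A$ applied after $B$) or a different form of the $A$-process. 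The point to verify carefully is simply that the resulting pair satisfies $\ell' = 2k'$ and has first coordinate $\frac{2k+1}{4(k+1)}$. I will take the paper's formula as given, noting that for it $\ell' = 2k'$ holds identically and $(k',\ell') \ne (0,1)$.

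\textbf{Step 2: apply the lemma.} Since $\beta + \ell' - 2k' = 0$, both cases of Lemma~\ref{le:C-W_exp_pairs} apply, and we use the second one, which is valid when $\beta + \ell' - 2k' \le 0$. With $\beta=0$ it gives
$$(\log x)^{-1} G_{\alpha,0,j}(x) \ll x^{\frac{k'}{2k'+1-\ell'}} + x^{k' + \frac{\ell'-2k'}{\alpha}} + x^{\frac{2}{\alpha}-1}.$$
Now $2k'+1-\ell' = 1$, so the first exponent equals $k'$. The second exponent equals $k'$ as well, since $\ell' - 2k' = 0$. Both terms are therefore $x^{\frac{2k+1}{4(k+1)}}$, which gives the claimed bound
$$G_{\alpha,0,j}(x) \ll \left( x^{\frac{2k+1}{4(k+1)}} + x^{\frac{2}{\alpha}-1} \right) \log x.$$
Alternatively, the first case of the lemma gives $x^{\frac{k'(\alpha-2)+\ell'}{\alpha}} = x^{k'}$ directly, which is the same bound.

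\textbf{Main obstacle.} The argument itself is immediate. The only delicate point is confirming the exponent-pair computation of Step 1, so that the pair really satisfies $\ell' = 2k'$ with first coordinate $\frac{2k+1}{4(k+1)}$. If the bookkeeping of the $A$ and $B$ processes fails to produce this, one should instead check that some pair obtained from $\bigl(k,k+\frac12\bigr)$ has $\ell' = 2k'$ and that first coordinate.
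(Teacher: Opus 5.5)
Your overall route is the paper's: apply Lemma~\ref{le:C-W_exp_pairs} with $\beta=0$ to the pair $BA\bigl(k,k+\frac12\bigr)$, observe that $\ell'=2k'$, and read off the exponents. Your Step~2 is correct.

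\textbf{The gap is in Step~1.} Your own computation produced $\bigl(\frac{1}{4(k+1)},\frac{2k+1}{2(k+1)}\bigr)$, which does \emph{not} satisfy $\ell'=2k'$. You then simply adopted the paper's formula. As written, the one non-trivial input of the argument is therefore assumed, not proved.

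The discrepancy is an arithmetic slip in the $A$-process. Since $A(k,\ell)=\bigl(\frac{k}{2k+2},\frac{k+\ell+1}{2k+2}\bigr)$ and $\ell=k+\frac12$, the numerator is $k+\ell+1=2k+\frac32$, not $k+\frac32$; you dropped the leading $k$. Correctly,
\[
A\bigl(k,k+\tfrac12\bigr)=\Bigl(\frac{k}{2(k+1)},\frac{4k+3}{4(k+1)}\Bigr),\qquad BA\bigl(k,k+\tfrac12\bigr)=\Bigl(\frac{4k+3}{4(k+1)}-\frac12,\ \frac{k}{2(k+1)}+\frac12\Bigr)=\Bigl(\frac{2k+1}{4(k+1)},\frac{2k+1}{2(k+1)}\Bigr).
\]
This is exactly the paper's pair, with $\ell'=2k'$ and $k'>0$, so $(k',\ell')\neq(0,1)$. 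Neither of your suggested explanations would have fixed it. The map $B$ fixes every pair of the form $\bigl(k,k+\frac12\bigr)$, so $AB\bigl(k,k+\frac12\bigr)=A\bigl(k,k+\frac12\bigr)$, which does not satisfy $\ell=2k$.

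\textbf{Also drop the closing ``alternatively'' remark.} The first branch of Lemma~\ref{le:C-W_exp_pairs}, as quoted, gives $x^{k'}\log x$ with no $x^{2/\alpha-1}$ term. That is not ``the same bound'': it is strictly stronger whenever $\alpha<2/(1+k')$.

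For such $\alpha$ it cannot hold. In the last dyadic range $n\asymp x^{1/\alpha}$ one has $x/n^{2}<1$, and the exponent-pair estimate carries the extra $N^{2}/x$ term. This is exactly where $x^{(\beta+2)/\alpha-1}$ comes from in the proof of that lemma. Moreover, the incomplete block of values of $\lfloor x/n\rfloor$ near $n=x^{1/\alpha}$ genuinely contributes a term of order $x^{2/\alpha-1}$ for suitable $x$. For $j=2$ this term is $-\frac13x^{2/\alpha-1}B_3(\{x^{1-1/\alpha}\})$, up to smaller terms.

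So use the second branch, which keeps this term. That is the one matching the statement you are proving, and it is what the paper implicitly uses.
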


\noindent
In particular, if the \textit{exponent pair conjecture} is true, i.e. if $(k,\ell) = \bigl( 0 , \frac{1}{2} \bigr) + \varepsilon$ is an exponent pair, then $G_{2,0,j}(x) \ll x^{1/4 + \varepsilon}$. Historically, the first exponent pair of the form $\bigl(k,k+\frac{1}{2} \bigr)$ to be discovered was the exponent pair $\left (\frac{1}{6},\frac{2}{3} \right) $, coming from Lemma~\ref{le:3rd_test}. Subsequently, the value of $k$ has been improved over the years by Huxley, Watt, Kolesnik and Bourgain, proving successively that $k = \frac{9}{56} + \varepsilon$, $k = \frac{89}{560} + \varepsilon$, $k = \frac{17}{108} + \varepsilon$, $k = \frac{89}{570} + \varepsilon$, $k = \frac{32}{205} + \varepsilon$, and $k = \frac{13}{84} + \varepsilon$ are admissible values for $k$. See \cite{trud24} for more details. In \cite{pet88}, the author chose $k = \frac{9}{56} + \varepsilon$ ; in \cite{zhai13}, the authors picked up $k = \frac{32}{205} + \varepsilon$. Taking $k = \frac{13}{84} + \varepsilon$ yields the best value to date.

\begin{cor}
Let $\alpha \in \R_{> 1}$ and $j \in \Z_{\geqslant 2}$. Then, for all $x \geqslant e$ sufficiently large and $\varepsilon > 0$, we have
$$G_{\alpha,0,j}(x) \ll x^{55/194 + \varepsilon} + x^{\frac{2}{\alpha}-1}  \log x.$$
\end{cor}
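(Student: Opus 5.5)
The corollary follows from the preceding lemma applied with the currently best admissible value $k = \frac{13}{84} + \varepsilon'$ in an exponent pair of the shape $\bigl(k, k+\frac12\bigr)$. Concretely, I would take Bourgain's exponent pair $\bigl(\frac{13}{84}+\varepsilon', \frac{55}{84}+\varepsilon'\bigr)$, which is of the required form since $\frac{55}{84} = \frac{13}{84} + \frac{42}{84}$. If $\varepsilon'$ is added to both coordinates, the condition $\ell = k + \frac12$ holds exactly. If the pair in the literature is only stated as $\bigl(\frac{13}{84}+\varepsilon, \frac{55}{84}+\varepsilon\bigr)$, I would check that this is literally what Bourgain's result gives; this matching of forms is the only point that needs care.

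Next I apply the previous lemma. The exponent is
$$\frac{2k+1}{4(k+1)} \quad\text{with}\quad k = \frac{13}{84}+\varepsilon'.$$
At $\varepsilon'=0$ we have $2k+1 = \frac{26+84}{84} = \frac{110}{84}$ and $4(k+1) = \frac{4\cdot 97}{84} = \frac{388}{84}$. The quotient is therefore $\frac{110}{388} = \frac{55}{194}$.

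The map $k \mapsto \frac{2k+1}{4(k+1)}$ is continuous and increasing, so for any $\varepsilon>0$ one can choose $\varepsilon'>0$ small enough that the exponent is at most $\frac{55}{194} + \frac{\varepsilon}{2}$. The factor $\log x$ on the first term is then absorbed into the remaining $x^{\varepsilon/2}$. The second term $x^{2/\alpha - 1}\log x$ is kept unchanged, which gives
$$G_{\alpha,0,j}(x) \ll x^{55/194+\varepsilon} + x^{\frac{2}{\alpha}-1}\log x,$$
with the implied constant depending on $\alpha$, $j$ and $\varepsilon$.

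The main obstacle is bibliographic rather than computational. It consists of confirming that $\bigl(\frac{13}{84}+\varepsilon, \frac{55}{84}+\varepsilon\bigr)$ is a genuine exponent pair, which follows from Bourgain's decoupling-based estimates (see \cite{trud24}), and that it satisfies the hypothesis $\ell = k+\frac12$ of the preceding lemma. The lemma itself rests on Lemma~\ref{le:C-W_exp_pairs} with the pair $BA\bigl(k,k+\frac12\bigr)$, for which $\beta+\ell-2k = 0$ when $\beta = 0$. Either branch of Lemma~\ref{le:C-W_exp_pairs} therefore applies. I would use the first branch, which gives $x^{\ell/\alpha + k(\alpha-2)/\alpha}$, and equivalently the second, with exponent $\frac{k}{2k+1-\ell} = \frac{k}{1} = k$ for the $BA$-pair. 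Both reduce to $x^{(2k+1)/(4(k+1))}$ after substitution, and this consistency is what I would double-check.
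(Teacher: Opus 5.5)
Your argument is correct and follows the paper's route: you insert Bourgain's pair $\bigl(\frac{13}{84}+\varepsilon,\frac{55}{84}+\varepsilon\bigr)$, which has the form $\bigl(k,k+\frac12\bigr)$, into the preceding lemma, check that $\frac{2k+1}{4(k+1)}=\frac{55}{194}$ at $k=\frac{13}{84}$, and absorb both the $\varepsilon$-perturbation (by monotonicity) and the factor $\log x$ into $x^{\varepsilon}$. One small correction to your closing remark: the two branches of Lemma~\ref{le:C-W_exp_pairs} are not interchangeable here, because only the second carries the term $x^{2/\alpha-1}$ (coming from the $x^{-1}N^{\beta+2}$ part in its proof), which exceeds $x^{55/194}$ for $\alpha$ close to $1$ and is why it survives in the preceding lemma and in the corollary, so that is the branch you should rely on.
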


\noindent
As another example, using \cite[Theorem~1, (3), (i), p. 519]{ishi93} we get the following improvement of \cite[(5.7)]{pet88}.

\begin{cor}
\label{cor:main}
Let $T \in \R_{\geqslant 1}$. Then, for all $\varepsilon > 0$, we have
$$\int_{1}^{T} d(x) \diff{x} = - \mfrac{1}{2} \left (\gamma + \log 2 \pi \right) T  + O \left(  T^{55/194 + \varepsilon} \right).$$
\end{cor}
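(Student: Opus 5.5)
The plan is to reduce the integral of $d$ to the weighted sum $S_{\sigma}$. That sum is controlled by Lemma~\ref{le:main}, and its error terms are exactly Chowla--Walum sums. For those we use the exponent-pair bounds of the previous subsection instead of the explicit Proposition~\ref{pro:C_W}.

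\emph{Step 1: an exact identity for the integral.} By partial summation,
$$\int_1^T \sum_{n\ioe x}\mfrac{\sigma(n)}{n}\diff x=\sum_{n\ioe T}\mfrac{\sigma(n)}{n}(T-n)=-S_{\sigma}(T).$$
Writing $\sum_{n\ioe x}\sigma(n)/n=d(x)+m_{\sigma}(x)$ and integrating $m_{\sigma}$ explicitly, this gives
$$\int_1^T d(x)\diff x=-S_{\sigma}(T)-\tfrac12\zeta(2)T^2+\tfrac12 T\log T-\tfrac12 T+O(1).$$

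\emph{Step 2: the asymptotics of $S_{\sigma}$.} Lemma~\ref{le:main} is the identity behind Theorem~\ref{th:sigma}, recalling that $\D(x)-xd(x)=S_{\sigma}(x)-M_{\sigma}(x)+xm_{\sigma}(x)$. It gives
$$S_{\sigma}(T)=-\tfrac12\zeta(2)T^2+\tfrac12 T\log T+\tfrac{T}{2}\left(\log 2\pi+\gamma-1\right)+E(T).$$
Before the trivial estimates are applied, $E(T)$ consists of $O(1)$ terms, terms of size $O(T^{\varepsilon})$ coming from Lemmas~\ref{le:1/n^2} and~\ref{le:Stirling}, and a bounded multiple of
$$\sum_{n\ioe\sqrt T}\Bigl(1+\mfrac{n^2}{T}\Bigr)B_2\Bigl(\Bigl\lbrace\mfrac Tn\Bigr\rbrace\Bigr)=G_{2,0,2}(T)+T^{-1}G_{2,2,2}(T).$$
Substituting into Step~1, the $T^2$ and $T\log T$ terms cancel. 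What remains is
$$\int_1^T d(x)\diff x=-\tfrac12(\gamma+\log 2\pi)T+O\bigl(|E(T)|+1\bigr),$$
so everything reduces to bounding $G_{2,0,2}(T)$ and $T^{-1}G_{2,2,2}(T)$.

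\emph{Step 3: bounding the Chowla--Walum sums.} Take $k=\frac{13}{84}+\varepsilon$, so that $\bigl(k,k+\frac12\bigr)$ is an exponent pair (Bourgain). The pair $BA\bigl(k,k+\frac12\bigr)$ equals $\bigl(\frac{55}{194},\frac{55}{97}\bigr)+O(\varepsilon)$.

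For $\beta=0$, $\alpha=2$, the preceding corollary gives $G_{2,0,2}(T)\ll T^{55/194+\varepsilon}+\log T$.

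For $\beta=2$, $\alpha=2$, we are in the first case of Lemma~\ref{le:C-W_exp_pairs}, since $\beta+\ell-2k\soe 0$. That case gives $G_{2,2,2}(T)\ll T^{1+\ell/2}\log T$. Hence
$$T^{-1}G_{2,2,2}(T)\ll T^{\ell/2}\log T=T^{55/194+\varepsilon}.$$
This is exactly why this pair is the right choice: the same exponent $\ell/2=\frac{2k+1}{4(k+1)}$ controls both sums. One could instead invoke \cite[Theorem~1, (3), (i), p.~519]{ishi93} to bound the relevant weighted sum directly.

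\emph{Main obstacle.} The delicate point is not the exponent sums but the bookkeeping in Step~2. One has to confirm that Lemma~\ref{le:main} leaves no residual $\psi$-sum of size larger than $T^{55/194}$, and that the linear term in $T$ comes out with constant exactly $-\frac12(\gamma+\log 2\pi)$. For the constant, I would check against Theorem~\ref{th:sigma}: its main term $\frac{x}{2}(\log 2\pi+\gamma-1)$, combined with the $-\frac12 T$ from Step~1, must give $-\frac12(\gamma+\log 2\pi)T$.
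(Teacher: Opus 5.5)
Your proof is correct, but it takes a different route from the paper. The paper's one-line justification relies on an outside result, \cite[Theorem~1, (3), (i)]{ishi93}, combined with the exponent-pair corollary for $G_{\alpha,0,j}$. You instead carry out the reduction inside the paper. Your Step~1, together with $\D(x)-xd(x)=S_\sigma(x)-M_\sigma(x)+xm_\sigma(x)$, amounts to the exact identity
\[
\int_1^T d(x)\diff x=-\bigl(\D(T)-Td(T)\bigr)-\frac{T}{2}+\frac{1+\zeta(2)}{2}.
\]
Lemma~\ref{le:main} then gives at once
\[
\int_1^T d(x)\diff x=-\frac{T}{2}\left(\gamma+\log 2\pi\right)-\frac12\Bigl(G_{2,0,2}(T)+\frac1T\,G_{2,2,2}(T)\Bigr)+O(1),
\]
and the constant comes out as you anticipated.

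Your route is self-contained and shows that the corollary is really Lemma~\ref{le:main} in another form. Combined with Theorem~\ref{th:sigma}, it even yields a fully explicit version with exponent $3/10$. The paper's route is shorter and leans on the literature.

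Two of your concerns are unnecessary. First, Lemma~\ref{le:main} is already an identity for all $x\geqslant 1$ with error $O^{\star}(\frac12)$. So no residual $\psi$-sum or $T^{\varepsilon}$-size terms need tracking in Step~2. Second, $T^{-1}G_{2,2,2}(T)$ does not need the $BA$ pair. The explicit bound \eqref{eq:borne_G} gives $T^{-1}G_{2,2,2}(T)\ll T^{1/4}\log T$, and so does the pair $(\frac12,\frac12)$ in the first case of Lemma~\ref{le:C-W_exp_pairs}. Hence only $G_{2,0,2}$ dictates the exponent $55/194$, not a need for one exponent to control both sums.
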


\subsection{Preparatory lemmas}

\begin{lem}
\label{le:prep_1}
For all $x \geqslant 1$
\begin{align}
   \sum_{n \leqslant \sqrt{x}} \ \sum_{m \leqslant x/n} (m+n) &= \mfrac{x^{2}}{2} \sum_{n \leqslant \sqrt{x}} \mfrac{1}{n^2} + x \left \lfloor \sqrt{x} \right \rfloor - \tfrac{1}{12} \left \lfloor \sqrt{x} \right \rfloor \left( 3\left \lfloor \sqrt{x} \right \rfloor + 4 \right) \notag  \\
   & \hspace*{1cm}  - \sum_{n \leqslant \sqrt{x}} \left( n+\mfrac{x}{n}\right) \psi \left( \mfrac{x}{n}\right) + \mfrac{1}{2} \, G_{2,0,2} (x) \label{eq:sum_1}
\end{align}
and
\begin{align}
   x \sum_{n \leqslant \sqrt{x}} \ \sum_{m \leqslant x/n} \mfrac{m+n}{mn} &= x^{2} \sum_{n \leqslant \sqrt{x}} \mfrac{1}{n^2} + x  \left \lfloor \sqrt{x} \right \rfloor \left( \log x + \gamma \right) - x \sum_{n \leqslant \sqrt{x}} \log n - \sum_{n \leqslant \sqrt{x}} \left( n + \mfrac{x}{n} \right) \psi \left( \mfrac{x}{n}\right) \notag \\
   & \hspace*{1cm}  - \mfrac{x}{2} H \left( \sqrt{x} \right) - \mfrac{1}{2x} \, G_{2,2,2} (x) + O^{\star} \left( \mfrac{1}{\pi^3} \right). \label{eq:sum_2}
\end{align}
\end{lem}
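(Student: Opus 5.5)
The proof is a direct computation: evaluate the inner sums over $m$ exactly via $\lfloor t\rfloor = t - \tfrac12 - \psi(t)$, then collect the Bernoulli terms. Throughout, fix $n \leqslant \sqrt{x}$, set $y = x/n$, and write $\lfloor y \rfloor = y - \tfrac12 - \psi(y)$.

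\textbf{Proof of \eqref{eq:sum_1}.} Split the inner sum as
$$\sum_{m \leqslant y} (m+n) = \tfrac12 \lfloor y\rfloor\bigl(\lfloor y\rfloor+1\bigr) + n\lfloor y\rfloor .$$
Expanding in $\psi(y)$ gives
$$\tfrac12 \lfloor y\rfloor\bigl(\lfloor y\rfloor+1\bigr) = \tfrac12\Bigl( y^2 - 2y\psi(y) + \psi(y)^2 - \tfrac14\Bigr).$$
Using $\psi^2 = B_2(\{\cdot\}) + \tfrac1{12}$, this becomes
$$\tfrac{y^2}{2} - y\psi(y) + \tfrac12 B_2(\{y\}) - \tfrac1{12},$$
and also $n\lfloor y\rfloor = x - \tfrac n2 - n\psi(y)$. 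Summing over $n \leqslant \sqrt{x}$:
\begin{itemize}
\item the $y^2/2$ terms give $\tfrac{x^2}{2}\sum 1/n^2$;
\item the $\psi$ terms combine into $-\sum (n + x/n)\psi(x/n)$;
\item the $B_2$ terms give $\tfrac12 G_{2,0,2}(x)$;
\item the constants give $x\lfloor\sqrt x\rfloor - \tfrac1{12}\lfloor\sqrt x\rfloor - \tfrac14\lfloor\sqrt x\rfloor(\lfloor\sqrt x\rfloor+1)$, which simplifies to $x\lfloor\sqrt x\rfloor - \tfrac1{12}\lfloor\sqrt x\rfloor(3\lfloor\sqrt x\rfloor+4)$.
\end{itemize}
This is exactly \eqref{eq:sum_1}, with no error term.

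\textbf{Proof of \eqref{eq:sum_2}.} Write $\frac{m+n}{mn} = \frac1n + \frac1m$, so that
$$x\sum_{m\leqslant y}\frac{m+n}{mn} = \frac{x}{n}\lfloor y\rfloor + xH(y).$$
For the first piece,
$$\frac{x}{n}\lfloor y\rfloor = \frac{x^2}{n^2} - \frac{x}{2n} - \frac{x}{n}\psi(x/n).$$
Summed over $n$, it gives $x^2\sum 1/n^2 - \tfrac x2 H(\sqrt x) - \sum (x/n)\psi(x/n)$. For the second piece, apply \eqref{eq:harm_2} with $z = y \geqslant \sqrt{x} \geqslant 1$:
$$xH(x/n) = x\log x - x\log n + \gamma x - n\psi(x/n) - \frac{n^2}{2x}B_2(\{x/n\}) + O^\star\!\Bigl(\frac{n^3}{\pi^3 x^2}\Bigr).$$
Summing over $n \leqslant \sqrt{x}$ produces:
\begin{itemize}
\item the main terms $x\lfloor\sqrt x\rfloor(\log x + \gamma) - x\sum\log n$;
\item $-\sum n\psi(x/n)$, which together with the earlier $\psi$ sum gives $-\sum (n + x/n)\psi(x/n)$;
\item $-\tfrac1{2x}G_{2,2,2}(x)$ from the $B_2$ terms;
\item the error $\pi^{-3}x^{-2}\sum_{n\leqslant\sqrt x} n^3$.
\end{itemize}

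\textbf{The error term.} This is the only point needing care. We need $\sum_{n\leqslant N} n^3 \leqslant x^2$ for $N = \lfloor \sqrt x\rfloor$. Indeed,
$$\sum_{n\leqslant N} n^3 = \frac{N^2(N+1)^2}{4} \leqslant N^4,$$
since $(N+1)^2 \leqslant 4N^2$ for $N \geqslant 1$. As $N^4 \leqslant x^2$, the error is at most $\pi^{-3}$, giving $O^\star(\pi^{-3})$ and completing \eqref{eq:sum_2}.

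There is no real obstacle beyond careful bookkeeping of signs and of the constants in \eqref{eq:sum_1}.
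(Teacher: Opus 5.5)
Your proposal is correct and follows essentially the same route as the paper: evaluate the inner sums exactly with $\lfloor t\rfloor = t-\tfrac12-\psi(t)$ and $\psi^2 = B_2(\{\cdot\})+\tfrac1{12}$ for \eqref{eq:sum_1}, then split $\frac{m+n}{mn}=\frac1m+\frac1n$ and apply \eqref{eq:harm_2} to $H(x/n)$ for \eqref{eq:sum_2}. You also make explicit the bound $\sum_{n\leqslant\sqrt x} n^3\leqslant \lfloor\sqrt x\rfloor^4\leqslant x^2$ behind the $O^\star(\pi^{-3})$ error term, which the paper uses without comment.
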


\begin{proof}
Straightforwardly, we have
\begin{align*}
    \sum_{n \leqslant \sqrt{x}} \ \sum_{m \leqslant x/n} (m+n) &= \sum_{n \leqslant \sqrt{x}} \left( \mfrac{1}{2} \left \lfloor \mfrac{x}{n} \right \rfloor \left \lfloor \mfrac{x}{n} + 1 \right \rfloor + n \left \lfloor \mfrac{x}{n} \right \rfloor \right) \\   
    &= \sum_{n \leqslant \sqrt{x}}  \left( \tfrac{1}{2} \left( \mfrac{x}{n} \right)^{2} - \mfrac{x}{n} \psi \left( \mfrac{x}{n} \right) + \tfrac{1}{2} B_2 \left( \left\lbrace \mfrac{x}{n} \right\rbrace \right) - \tfrac{1}{12} + n \left( \mfrac{x}{n} - \psi \left( \mfrac{x}{n}\right) - \tfrac{1}{2} \right)  \right) \\ 
    &= \mfrac{x^{2}}{2} \sum_{n \leqslant \sqrt{x}} \mfrac{1}{n^2} + x \left \lfloor \sqrt{x} \right \rfloor - \tfrac{1}{4} \left \lfloor \sqrt{x} \right \rfloor \left \lfloor \sqrt{x} + 1 \right \rfloor  - \tfrac{1}{12} \left \lfloor \sqrt{x} \right \rfloor \\
    & \hspace*{1cm} - \sum_{n \leqslant \sqrt{x}} \left( n+\mfrac{x}{n}\right) \psi \left( \mfrac{x}{n}\right) + \mfrac{1}{2} \sum_{n \leqslant \sqrt{x}} B_2 \left( \left\lbrace \mfrac{x}{n} \right\rbrace \right). 
\end{align*}
Similarly
\begin{align*}
   x \sum_{n \leqslant \sqrt{x}} \ \sum_{m \leqslant x/n} \mfrac{m+n}{mn} &= x \sum_{n \leqslant \sqrt{x}} \ \sum_{m \leqslant x/n} \left( \mfrac{1}{m} + \mfrac{1}{n} \right) \\
   &= x^{2} \sum_{n \leqslant \sqrt{x}} \mfrac{1}{n^2} - x \sum_{n \leqslant \sqrt{x}} \mfrac{1}{n} \psi \left( \mfrac{x}{n}\right) - \mfrac{x}{2} H \left( \sqrt{x} \right) + x \sum_{n \leqslant \sqrt{x}} H \left( \mfrac{x}{n}\right) 
\end{align*}
and since, by \eqref{eq:harm_2},
\begin{align*}
   x \sum_{n \leqslant \sqrt{x}} H \left( \mfrac{x}{n}\right) &= x \sum_{n \leqslant \sqrt{x}} \left( \log \mfrac{x}{n} + \gamma - \mfrac{n}{x} \psi \left( \mfrac{x}{n}\right) - \mfrac{n^{2}}{2x^{2}} B_2 \left( \left\lbrace \mfrac{x}{n} \right\rbrace \right) + O^{\star} \left( \mfrac{n^{3}}{\pi^{3} x^{3}}\right)  \right) \\
   &= x  \left \lfloor \sqrt{x} \right \rfloor \left( \log x + \gamma \right) - x \sum_{n \leqslant \sqrt{x}} \log n - \sum_{n \leqslant \sqrt{x}} n \psi \left( \mfrac{x}{n}\right) - \mfrac{1}{2x} \sum_{n \leqslant \sqrt{x}} n^{2} B_2 \left( \left\lbrace \mfrac{x}{n} \right\rbrace \right) + O^{\star} \left( \mfrac{1}{\pi^3} \right)
\end{align*}
we derive
\begin{align*}
   x \sum_{n \leqslant \sqrt{x}} \ \sum_{m \leqslant x/n} \mfrac{m+n}{mn} &= x^{2} \sum_{n \leqslant \sqrt{x}} \mfrac{1}{n^2} + x  \left \lfloor \sqrt{x} \right \rfloor \left( \log x + \gamma \right) - x \sum_{n \leqslant \sqrt{x}} \log n - \sum_{n \leqslant \sqrt{x}} \left( n + \mfrac{x}{n} \right) \psi \left( \mfrac{x}{n}\right) \\
   & \hspace*{1cm}  - \mfrac{x}{2} H \left( \sqrt{x} \right) - \mfrac{1}{2x} \, G_{2,2,2} (x) + O^{\star} \left( \mfrac{1}{\pi^3} \right). 
\end{align*}
\end{proof}

\begin{lem}
\label{le:prep_2}
For all $x \geqslant 1$,
\begin{align}
   & - \mfrac{x^{2}}{2} \sum_{n \leqslant \sqrt{x}} \mfrac{1}{n^2} -  x \left \lfloor \sqrt{x} \right \rfloor \Bigl( \log x + \gamma - 1 - H \left( \sqrt{x}\right) \Bigr) + x \sum_{n \leqslant \sqrt{x}} \log n \notag \\
   & \hspace*{1cm} = - \mfrac{x^2}{2} \zeta(2) + \mfrac{x^{3/2}}{2} + \mfrac{x}{4} \log x + \mfrac{x}{2} \left( \log (2 \pi) - 1 \right) + \mfrac{\sqrt{x}}{12} - \mfrac{3x}{2} \psi(\sqrt{x})  \notag \\
   & \hspace{2cm}  + \mfrac{\sqrt{x}}{2} \left( \psi(\sqrt{x}) + 3B_2 \left( \left\lbrace \sqrt{x} \right\rbrace \right) \right) + O^{\star}  \left( \mfrac{4}{\pi^{3}} + \mfrac{1}{12} \right). \label{eq:Stirling+harm+1/n^2}
\end{align}
\end{lem}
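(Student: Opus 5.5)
Set $z=\sqrt{x}$ and $N=\lfloor z\rfloor = z-\psi(z)-\tfrac12$. The plan is to substitute the three Euler--Maclaurin lemmas, all at $z=\sqrt{x}$, into the left-hand side of \eqref{eq:Stirling+harm+1/n^2}, and then expand everything in powers of $z$, $\psi(z)$ and $B_2(\{z\})$. The three substitutions are:
\begin{itemize}
\item Lemma~\ref{le:1/n^2}, written as $\sum_{n\le z}n^{-2}=\zeta(2)-\sum_{n>z}n^{-2}$;
\item \eqref{eq:harm_2} for $H(\sqrt{x})$;
\item Lemma~\ref{le:Stirling} for $\sum_{n\le z}\log n$.
\end{itemize}

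\emph{First term.} It gives
$$-\tfrac{x^2}{2}\zeta(2)+\tfrac{x^2}{2}\Bigl(\tfrac1z+\tfrac{\psi(z)}{z^2}+\tfrac{B_2(\{z\})}{z^3}\Bigr)+O^\star\Bigl(\tfrac{3}{2\pi^3}\Bigr),$$
which equals $-\tfrac{x^2}{2}\zeta(2)+\tfrac{x^{3/2}}{2}+\tfrac{x}{2}\psi(z)+\tfrac{\sqrt{x}}{2}B_2(\{z\})+O^\star(\tfrac{3}{2\pi^3})$.

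\emph{Second term.} Since $\log x=2\log z$, \eqref{eq:harm_2} gives
$$\log x+\gamma-1-H(z)=\log z-1+\tfrac{\psi(z)}{z}+\tfrac{B_2(\{z\})}{2z^2}+O^\star\Bigl(\tfrac{1}{\pi^3z^3}\Bigr).$$
I multiply this by $-xN=-x\bigl(z-\psi(z)-\tfrac12\bigr)$. The error becomes at most $xN/(\pi^3z^3)\le 1/\pi^3$, a bounded constant.

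\emph{Third term.} Lemma~\ref{le:Stirling} gives
$$x\Bigl(z\log z-z+\tfrac12\log 2\pi-\psi(z)\log z+\tfrac{B_2(\{z\})}{2z}\Bigr)+O^\star\Bigl(\tfrac{1}{2\pi^3}\Bigr).$$

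Collecting the three error terms gives $\tfrac{3}{2\pi^3}+\tfrac{1}{\pi^3}+\tfrac{1}{2\pi^3}$. This is less than $\tfrac{4}{\pi^3}$, leaving the $\tfrac{1}{12}$ available for absorbing a leftover term.

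\emph{Cancellations.} Expanding the product $-x(z-\psi-\tfrac12)(\log z-1+\psi/z+\cdots)$:
\begin{itemize}
\item The term $xz\log z$ cancels against Stirling's $xz\log z$.
\item The term $-xz$ cancels against the $+xz$ coming from $-xz\cdot(-1)$.
\item The term $x\psi(z)\log z$ cancels Stirling's $-x\psi(z)\log z$.
\item The term $+\tfrac{x}{2}\log z=\tfrac{x}{4}\log x$ survives.
\item The constants combine: $-\tfrac{x}{2}$ from $-x\cdot\tfrac12\cdot(-1)\cdot(-1)$ together with $\tfrac{x}{2}\log 2\pi$ gives $\tfrac{x}{2}(\log 2\pi-1)$.
\end{itemize}

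\emph{The $\psi(z)$ terms.} Several terms of size $x\psi$ appear: $-x\psi$ from $-xz\cdot\psi/z$, $-x\psi$ from $-x\psi\cdot 1$, and $+\tfrac{x}{2}\psi$ from the first term. Together they give $-\tfrac{3x}{2}\psi(z)$.

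\emph{Lower-order terms.} The remaining terms of size $\sqrt{x}$ come from:
\begin{itemize}
\item $-xz\cdot B_2/(2z^2)=-\tfrac{\sqrt{x}}{2}B_2$;
\item $x\cdot\tfrac{B_2}{2z}=\tfrac{\sqrt{x}}{2}B_2$;
\item $\tfrac{\sqrt{x}}{2}B_2$ from the first term;
\item $x\psi\cdot\psi/z=\sqrt{x}\,\psi^2=\sqrt{x}\,(B_2+\tfrac1{12})$;
\item $\tfrac{x}{2}\cdot\psi/z=\tfrac{\sqrt{x}}{2}\psi$.
\end{itemize}
These sum to $\tfrac{\sqrt{x}}{12}+\tfrac{\sqrt{x}}{2}\bigl(\psi(z)+3B_2(\{z\})\bigr)$, exactly as in the statement.

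\emph{Leftover terms.} Two terms remain: $x\psi\cdot B_2/(2z^2)=\tfrac12\psi B_2$ and $\tfrac{x}{2}\cdot B_2/(2z^2)=\tfrac14 B_2$. Their sum is $\tfrac12 B_2(\{z\})\bigl(\psi(z)+\tfrac12\bigr)=\tfrac12\{z\}B_2(\{z\})$. The main obstacle is showing that this is bounded by $\tfrac1{12}$ in absolute value, so that it fits in the final $O^\star$. Writing $t=\{z\}\in[0,1)$, I need the bound
$$\bigl|\tfrac12\,t\,(t^2-t+\tfrac16)\bigr|\le\tfrac1{12}.$$
This holds since $|t^2-t+\tfrac16|\le\tfrac16$ on $[0,1]$ and $0\le t\le 1$. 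Hence the leftover is $O^\star(\tfrac1{12})$, which gives the stated error $O^\star(\tfrac4{\pi^3}+\tfrac1{12})$.
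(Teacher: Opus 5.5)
Your proof is correct and follows essentially the same route as the paper: substitute Lemma~\ref{le:1/n^2}, \eqref{eq:harm_2} and Lemma~\ref{le:Stirling} at $z=\sqrt{x}$, use $\psi(z)^2=B_2(\{z\})+\tfrac1{12}$, and absorb the leftover $\tfrac12\{z\}B_2(\{z\})$ into $O^\star(\tfrac1{12})$; the paper writes this leftover as $-\tfrac14 B_2(\{\sqrt{x}\})(1+2\psi(\sqrt{x}))$ inside the subtracted product. The only difference is bookkeeping: the paper first combines the $1/n^2$ and Stirling contributions and bounds the remainder of the harmonic product by $2/\pi^3$, whereas your bound $xN/(\pi^3 z^3)\le 1/\pi^3$ gives a total of $3/\pi^3$, which lies within the stated $4/\pi^3$.
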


\begin{proof}
By Lemma~\ref{le:1/n^2}, we have
$$\mfrac{x^{2}}{2} \sum_{n \leqslant \sqrt{x}} \mfrac{1}{n^2} = \mfrac{x^2}{2} \zeta(2) - \mfrac{x^{3/2}}{2} - \mfrac{x \psi(\sqrt{x})}{2} - \mfrac{\sqrt{x}}{2} B_2 \left( \left\lbrace \sqrt{x} \right\rbrace \right) + O^{\star} \left( \mfrac{3}{2\pi^3} \right)$$
and using Lemma~\ref{le:Stirling}, we derive
$$x \sum_{n \leqslant \sqrt{x}} \log n = x^{3/2} \log \sqrt{x} - x^{3/2} + \tfrac{1}{2} x \log 2 \pi - x \psi \left( \sqrt{x}\right) \log \sqrt{x} + \tfrac{1}{2} \sqrt{x} B_2 \left( \{ \sqrt{x} \} \right)  + O^\star \left( \mfrac{1}{2\pi^{3}} \right)$$
so that
\begin{align}
    - \mfrac{x^{2}}{2} \sum_{n \leqslant \sqrt{x}} \mfrac{1}{n^2} + x \sum_{n \leqslant \sqrt{x}} \log n  &= - \mfrac{x^2}{2} \zeta(2) + \mfrac{x}{2} \log 2 \pi  \notag \\
    & \hspace*{0.25cm} + \left( \mfrac{x^{3/2}}{2} - \mfrac{x \psi \left( \sqrt{x}\right)}{2} \right) \left( \log x - 1 \right) + \sqrt{x} B_2 \left( \{ \sqrt{x} \} \right) + O^\star \left( \mfrac{2}{\pi^{3}} \right). \label{eq:Stirling}
\end{align}
Now since, by \eqref{eq:harm_2}
$$\log x + \gamma - 1 - H \left( \sqrt{x}\right) = \log \sqrt{x} - 1 + \mfrac{\psi(\sqrt{x})}{\sqrt{x}} + \mfrac{B_2 \left( \{ \sqrt{x} \} \right)}{2x} +  O^\star \left( \mfrac{1}{\pi^{3} x^{3/2}} \right)$$
we get, using the equality $\psi(\sqrt{x})^2 = B_2 \left( \{ \sqrt{x} \} \right) + \frac{1}{12}$
\begin{align}
   x \left \lfloor \sqrt{x} \right \rfloor \left( \log x + \gamma - 1 - H \left( \sqrt{x}\right) \right) &= x^{3/2} \log \sqrt{x} - x^{3/2} - \mfrac{x}{4}  \log x + \mfrac{x}{2} - \mfrac{\sqrt{x}}{12} - x \psi(\sqrt{x}) \log \sqrt{x} + 2x \psi(\sqrt{x}) \notag \\
   & \hspace*{1cm} - \mfrac{\sqrt{x}}{2} \left( \psi(\sqrt{x}) + B_2 \left( \left\lbrace \sqrt{x} \right\rbrace \right) \right) - \tfrac{1}{4} B_2 \left( \left\lbrace \sqrt{x} \right\rbrace \right) \left( 1 + 2 \psi(\sqrt{x})\right) + O^{\star}  \left( \mfrac{2}{\pi^{3}} \right) \notag \\
   & = x^{3/2} \log \sqrt{x} - x^{3/2} - \mfrac{x}{4}  \log x + \mfrac{x}{2} - \mfrac{\sqrt{x}}{12} - x \psi(\sqrt{x}) \log \sqrt{x} + 2x \psi(\sqrt{x}) \notag \\
   & \hspace*{1cm} - \mfrac{\sqrt{x}}{2} \left( \psi(\sqrt{x}) + B_2 \left( \left\lbrace \sqrt{x} \right\rbrace \right) \right) + O^{\star}  \left( \mfrac{2}{\pi^{3}} + \mfrac{1}{12} \right). \label{eq:Stirling-harm-2}
\end{align}
Substracting \eqref{eq:Stirling-harm-2} to \eqref{eq:Stirling}, we obtain
\begin{align*}
   & - \mfrac{x^{2}}{2} \sum_{n \leqslant \sqrt{x}} \mfrac{1}{n^2} -  x \left \lfloor \sqrt{x} \right \rfloor \Bigl( \log x + \gamma - 1 - H \left( \sqrt{x}\right) \Bigr) + x \sum_{n \leqslant \sqrt{x}} \log n \\
   & \hspace*{1cm} = - \mfrac{x^2}{2} \zeta(2) + \mfrac{x^{3/2}}{2} + \mfrac{x}{4} \log x + \mfrac{x}{2} \left( \log (2 \pi) - 1 \right) + \mfrac{\sqrt{x}}{12} - \mfrac{3x}{2} \psi(\sqrt{x})  \\
   & \hspace{2cm}  + \mfrac{\sqrt{x}}{2} \left( \psi(\sqrt{x}) + 3B_2 \left( \left\lbrace \sqrt{x} \right\rbrace \right) \right) + O^{\star}  \left( \mfrac{4}{\pi^{3}} + \mfrac{1}{12} \right) 
\end{align*}
as asserted.
\end{proof}

\subsection{Main Lemma}

\begin{lem}
\label{le:main}
For all $x \geqslant 1$,
$$\D(x)-xd(x) = \mfrac{x}{2} \left( \log 2 \pi + \gamma - 1 \right)  + \mfrac{1}{2} \left( G_{2,0,2}(x) + \mfrac{1}{x} \, G_{2,2,2}(x) \right)  + O^{\star}  \left( \mfrac{1}{2} \right).$$
\end{lem}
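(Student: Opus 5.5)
Since $\sigma(n)=\sum_{mn' = n}(\ldots)$, I start from the Dirichlet hyperbola method applied to $\sigma(k)=\sum_{mn=k} m$, written symmetrically. Writing $\sum_{mn\leqslant x} m = \sum_{mn\leqslant x}\tfrac12(m+n)$ is not quite right, since $\sigma$ weights only one variable. Instead, the right identity is the following. Over the hyperbola region $mn\leqslant x$, split into $n\leqslant\sqrt{x}$, $m\leqslant\sqrt{x}$, and subtract the square:
$$\sum_{k\leqslant x}\sigma(k)=\sum_{n\leqslant\sqrt x}\sum_{m\leqslant x/n}(m+n)-\sum_{m,n\leqslant\sqrt x} m .$$
Similarly, for $\sigma(k)/k=\sum_{mn=k}1/n$, I get
$$\sum_{k\leqslant x}\frac{\sigma(k)}{k}=\sum_{n\leqslant\sqrt x}\sum_{m\leqslant x/n}\Bigl(\frac1m+\frac1n\Bigr)-\lfloor\sqrt x\rfloor H(\sqrt x).$$
The square terms are $\tfrac12\lfloor\sqrt x\rfloor^2(\lfloor\sqrt x\rfloor+1)$ and $x\lfloor\sqrt x\rfloor H(\sqrt x)$ respectively. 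These two double sums are exactly the left-hand sides of \eqref{eq:sum_1} and \eqref{eq:sum_2}, which is why Lemma~\ref{le:prep_1} was prepared.

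I then form $\sum\sigma(k)-x\sum\sigma(k)/k$ by subtracting \eqref{eq:sum_2} from \eqref{eq:sum_1}. The decisive feature is that the terms $\sum_{n\leqslant\sqrt x}(n+x/n)\psi(x/n)$ appear identically in both expansions and cancel. This is the whole point of the weight $1-x/n$. What remains is the following:
\begin{itemize}
\item $-\tfrac{x^2}{2}\sum_{n\leqslant\sqrt x} n^{-2}$;
\item $x\lfloor\sqrt x\rfloor$ minus $x\lfloor\sqrt x\rfloor(\log x+\gamma)$;
\item $+x\sum\log n$;
\item $+\tfrac{x}{2}H(\sqrt x)$;
\item the polynomial terms in $\lfloor\sqrt x\rfloor$ from \eqref{eq:sum_1} and from the square corrections;
\item $\tfrac12 G_{2,0,2}(x)+\tfrac1{2x}G_{2,2,2}(x)$, plus $O^\star(1/\pi^3)$.
\end{itemize}
The square correction $+x\lfloor\sqrt x\rfloor H(\sqrt x)$ combines with these so that the expression contains exactly $-x\lfloor\sqrt x\rfloor(\log x+\gamma-1-H(\sqrt x))$. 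That is precisely the combination evaluated in Lemma~\ref{le:prep_2}.

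Next I substitute \eqref{eq:Stirling+harm+1/n^2}. This gives $-\tfrac{x^2}{2}\zeta(2)+\tfrac{x^{3/2}}2+\tfrac x4\log x+\tfrac x2(\log2\pi-1)+\dots$. I also expand the leftover $\tfrac x2 H(\sqrt x)$ via \eqref{eq:harm_1} or \eqref{eq:harm_2} as $\tfrac x4\log x+\tfrac{\gamma x}{2}-\tfrac{\sqrt x}{2}\psi(\sqrt x)+O^\star(\cdot)$. Finally I write $\lfloor\sqrt x\rfloor=\sqrt x-\psi(\sqrt x)-\tfrac12$ in all the polynomial terms, using $\psi^2=B_2+\tfrac1{12}$.

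Adding $-M_\sigma(x)+xm_\sigma(x)=-\tfrac{x^2}{2}\zeta(2)+x^2\zeta(2)-\tfrac x2\log x$ (with the signs arranged as in $\D-xd$), the $x^2\zeta(2)$ and $x\log x$ terms cancel, which serves as a consistency check. The $x^{3/2}$ terms and the $x\psi(\sqrt x)$ terms should also cancel. The constant $\tfrac x2(\log2\pi+\gamma-1)$ survives.

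The main obstacle is the bookkeeping in this last step. I must verify that every oscillating term of size $x\psi(\sqrt x)$, $\sqrt x\,\psi(\sqrt x)$ or $\sqrt xB_2(\{\sqrt x\})$ cancels exactly; otherwise the claimed $O^\star(1/2)$ would be false. I must also check that the accumulated constant errors, namely $\tfrac1{\pi^3}$, $\tfrac4{\pi^3}+\tfrac1{12}$, the $O^\star$ from $H(\sqrt x)$, and the bounded leftovers such as $\tfrac1{12}\psi$ and $B_2$ terms of size at most $\tfrac1{12}$, sum to at most $\tfrac12$. I would organize this by collecting the coefficients of $x^{3/2}$, $x\psi(\sqrt x)$, $x$, $\sqrt x$, $\sqrt x\psi$, $\sqrt xB_2$ and $O(1)$ separately. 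If a small residual survives, I would bound it crudely using $|\psi|\leqslant\tfrac12$ and $|B_2|\leqslant\tfrac16$.
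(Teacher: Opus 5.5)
Your proposal is correct and is essentially the paper's proof. The paper writes the hyperbola identity directly for $S_\sigma(x)$, which is your two identities subtracted, and then uses Lemma~\ref{le:prep_1}, Lemma~\ref{le:prep_2} and \eqref{eq:harm_1} exactly as you plan. Your aside is mistaken, though harmless since you never use it: $\sum_{mn\leqslant x}m=\frac12\sum_{mn\leqslant x}(m+n)$ does hold, by symmetry of the region $mn\leqslant x$.

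The bookkeeping you defer does close. Expand $\frac{1}{12}\lfloor\sqrt x\rfloor(6\lfloor\sqrt x\rfloor^2+9\lfloor\sqrt x\rfloor+4)$ using $\lfloor\sqrt x\rfloor=\sqrt x-\psi(\sqrt x)-\frac12$. This removes exactly the $\frac{x^{3/2}}{2}$, $-\frac{3x}{2}\psi(\sqrt x)$, $\frac{\sqrt x}{12}$ and $\frac32\sqrt x\,B_2(\{\sqrt x\})$ terms; the $\frac{\sqrt x}{2}\psi(\sqrt x)$ terms already cancel between Lemma~\ref{le:prep_2} and \eqref{eq:harm_1}. What is left is $\frac12\psi(\sqrt x)B_2(\{\sqrt x\})+\frac1{24}\in[0,\frac1{12}]$, so the total error is at most $\frac{5}{\pi^3}+\frac{13}{48}<\frac12$.
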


\begin{proof}
The Dirichlet hyperbola principe gives
\begin{align}
   S_{\sigma}(x) &= \sum_{n \leqslant \sqrt{x}} \ \sum_{m \leqslant x/n} (m+n) \left( 1 - \mfrac{x}{mn} \right) - \sum_{n \leqslant \sqrt{x}} n \left( \left \lfloor \sqrt{x} \right \rfloor - \mfrac{x}{n} H \left( \sqrt{x}\right)  \right) \notag \\
   &= \sum_{n \leqslant \sqrt{x}} \ \sum_{m \leqslant x/n} (m+n) \left( 1 - \mfrac{x}{mn} \right) - \tfrac{1}{2} \left \lfloor \sqrt{x} \right \rfloor^{2} \left \lfloor \sqrt{x} + 1 \right \rfloor + x \left \lfloor \sqrt{x} \right \rfloor H \left( \sqrt{x}\right). \label{eq:step_1}
\end{align}
Reporting \eqref{eq:sum_1} and \eqref{eq:sum_2} into \eqref{eq:step_1} yields
\begin{align}
   S_{\sigma}(x) &= - \mfrac{x^{2}}{2} \sum_{n \leqslant \sqrt{x}} \mfrac{1}{n^2} - \tfrac{1}{12} \left \lfloor \sqrt{x} \right \rfloor  \left( 6 \left \lfloor \sqrt{x} \right \rfloor^{2}+9 \left \lfloor \sqrt{x} \right \rfloor +4 \right)  - x  \left \lfloor \sqrt{x} \right \rfloor \Bigl( \log x + \gamma - 1 - H \left( \sqrt{x}\right) \Bigr) \notag \\
   & \hspace*{1cm} + \mfrac{x}{2} H \left( \sqrt{x} \right) + x \sum_{n \leqslant \sqrt{x}} \log n + \mfrac{1}{2} \, G_{2,0,2}(x) + \mfrac{1}{2x} \, G_{2,2,2}(x) + O^{\star} \left( \mfrac{1}{\pi^3} \right). \label{eq:step_2}
\end{align}
Furthermore, by \eqref{eq:harm_1}, we have
\begin{equation}
   \mfrac{x}{2} H(\sqrt{x}) = \mfrac{x}{2} \left( \log \sqrt{x} + \gamma \right) - \mfrac{\sqrt{x} \psi(\sqrt{x})}{2} + O^{\star} \left( \mfrac{1}{16}\right) \label{eq:harm_3} 
\end{equation}
so that reporting \eqref{eq:harm_3} and \eqref{eq:Stirling+harm+1/n^2} into \eqref{eq:step_2}, we get
\begin{align*}
   S_{\sigma}(x) &= - \mfrac{x^{2}}{2} \zeta(2) + \mfrac{x^{3/2}}{2} + \mfrac{x}{2} \left( \log x + \log 2 \pi + \gamma - 1 \right) + \mfrac{\sqrt{x}}{12} - \mfrac{1}{12} \left \lfloor \sqrt{x} \right \rfloor  \left( 6 \left \lfloor \sqrt{x} \right \rfloor^{2}+9 \left \lfloor \sqrt{x} \right \rfloor +4 \right) - \mfrac{3x}{2} \psi(\sqrt{x}) \\
   & \hspace*{1cm} + \mfrac{3}{2}\sqrt{x} B_2 \left( \left\lbrace \sqrt{x} \right\rbrace \right) + \mfrac{1}{2} \, G_{2,0,2}(x) + \mfrac{1}{2x} \, G_{2,2,2}(x) +  O^{\star}  \left( \mfrac{5}{\pi^{3}} + \mfrac{1}{12} + \mfrac{1}{16} \right)
\end{align*}
and the contribution of the term $ \frac{1}{12} \left \lfloor \sqrt{x} \right \rfloor  \left( 6 \left \lfloor \sqrt{x} \right \rfloor^{2}+9 \left \lfloor \sqrt{x} \right \rfloor +4 \right) $, along with $\psi(\sqrt{x})^2 = B_2 \left( \{ \sqrt{x} \} \right) + \frac{1}{12}$ again, provides
\begin{align*}
   S_{\sigma}(x) &= - \mfrac{x^{2}}{2} \zeta(2) + \mfrac{x}{2} \left( \log x + \log 2 \pi + \gamma - 1 \right)  + \underbrace{\tfrac{1}{2} \psi(\sqrt{x})B_2 \left( \left\lbrace \sqrt{x} \right\rbrace  \right)}_{| \dotsb | \leqslant \frac{1}{12}} + \tfrac{1}{24} \\
   & \hspace*{1cm} + \mfrac{1}{2} \, G_{2,0,2}(x) + \mfrac{1}{2x} \, G_{2,2,2}(x) + O^{\star}  \left( \mfrac{5}{\pi^{3}} + \mfrac{1}{12} + \mfrac{1}{16} \right) \\
   &= - \mfrac{x^{2}}{2} \zeta(2) + \mfrac{x}{2} \left( \log x + \log 2 \pi + \gamma - 1 \right)  \\
   & \hspace*{1cm} + \mfrac{1}{2} \left( G_{2,0,2}(x) + \mfrac{1}{x} \, G_{2,2,2}(x) \right) + O^{\star}  \left( \mfrac{5}{\pi^{3}} + \mfrac{13}{48}  \right) \\
   &= M_{\sigma}(x) - x m_{\sigma}(x) + \mfrac{x}{2} \left( \log 2 \pi + \gamma - 1 \right) + \mfrac{1}{2} \left( G_{2,0,2}(x) + \mfrac{1}{x} \, G_{2,2,2}(x) \right) + O^{\star}  \left( \mfrac{5}{\pi^{3}} + \mfrac{13}{48} \right). \\
\end{align*}
We complete the proof with $ \frac{5}{\pi^{3}} + \frac{13}{48} < \frac{1}{2} $. \end{proof}

\subsection{Proof of Theorem~\ref{th:sigma}}

\begin{proof}
By \eqref{eq:borne_G}, we have for all $x \geqslant 1$
$$\left | \mfrac{1}{2} \left(G_{2,0,2}(x) + \mfrac{1}{x} \, G_{2,2,2}(x) \right)  \right | < \mfrac{1}{2} \Biggl( \left( \np{17.94} \, x^{3/10} + \np{4.04} \, x^{1/4} \right) \left( \mfrac{\log x}{\log 4} + 1 \right) + \np{47.76} \, x^{1/4} \left( \mfrac{\log x}{\log 256} + 1 \right) \Biggr) $$
and since $x \geqslant 650$, we derive
$$\left | \mfrac{1}{2} \left(G_{2,0,2}(x) + \mfrac{1}{x} \, G_{2,2,2}(x) \right) \right | < \left( 8 \,  x^{3/10} + 10 \, x^{1/4} \right) \log x < \np{15.25} \, x^{3/10} \log x.$$
Reporting this estimate into Lemma~\ref{le:main} yields the desired result.
\end{proof}

\subsection{Proof of Corollary~\ref{cor:sigma} and~\ref{cor:sigma_2}}

\begin{proof}
By partial summation we derive
$$ \sdfrac{\D(x)}{x}-d(x)= - \mfrac{1}{2} \log x + \mfrac{\zeta(2)}{2} - \int_1^{x} \sdfrac{\D(t)}{t^2} \diff{t}$$
and using Theorem~\ref{th:sigma} yields, for all $x \geqslant 650$
$$\int_1^{x} \sdfrac{\D(t)}{t^2} \diff{t} = - \mfrac{1}{2} \log x + \mfrac{1}{2} \left(1 + \zeta(2) - \gamma - \log 2 \pi \right) + O^{\star} \left( 16 \, x^{-7/10} \log x\right) $$
and the right-hand side is easily seen to be negative when $x \soe 650$. Similarly, for all $x \geqslant 650$
$$\int_1^{x} \sdfrac{\D(t) + t/2}{t^2} \diff{t} = \int_1^{x} \sdfrac{\D(t)}{t^2} \diff{t} + \mfrac{1}{2} \log x = \mfrac{1}{2} \left(1 + \zeta(2) - \gamma - \log 2 \pi \right) + O^{\star} \left( 16 \, x^{-7/10} \log x\right)$$
implying the convergence of the integral and the result follows by making $x \to \infty$.
\end{proof}

\subsection*{Acknowledgments}

\appendix

\renewcommand{\thesection}{\Alph{section}} 
\makeatletter
\renewcommand\@seccntformat[1]{\appendixname\ \csname the#1\endcsname.\hspace{0.5em}}
\makeatother

\section{Useful explicit estimates}

\begin{thm}[Ostrowski's 2nd mean value theorem]
\label{th:moy_1}
Let $F : \left[ a,b \right] \to \C$ be an intégrable function and $G : \left[ a,b \right] \to \R$ be a monotone function. Then
$$\left| \int_a^b F(t) \, G(t) \diff{t} \right| \leqslant \left| G(a) \right| \, \max_{a \leqslant x \leqslant b} \left| \int_a^x F(t) \diff{t} \right| + \left| G(b) \right| \, \max_{a \leqslant x \leqslant b} \left| \int_x^b F(t) \diff{t} \right|.$$
Furthermore, if $G(a) G(b) \geqslant 0$ and $\left| G(a) \right| \geqslant \left| G(b) \right|$, then the $2$nd term may be omitted.
\end{thm}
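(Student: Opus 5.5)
The plan is to prove the statement directly from the Riemann--Stieltjes formulation of the second mean value theorem. We first treat $G$ non-increasing and real; the non-decreasing case follows by applying the result to $-G$, which leaves both sides unchanged. Set $\Phi(x) := \int_a^x F(t)\diff{t}$. This is a continuous function of bounded variation on $[a,b]$, and the bounds we need are $M_1 := \max_{a\ioe x\ioe b}|\Phi(x)|$ and $M_2 := \max_{a \ioe x \ioe b}\bigl|\Phi(b)-\Phi(x)\bigr| = \max_{a\ioe x\ioe b}\left|\int_x^b F(t)\diff t\right|$.

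First I will write $\int_a^b F G \diff t = \int_a^b G \diff\Phi$; this is valid because $G$ is bounded and monotone, hence Riemann integrable, and $\Phi$ is absolutely continuous. Integration by parts for Stieltjes integrals then gives
$$\int_a^b G\diff\Phi = G(b)\Phi(b) - \int_a^b \Phi \diff G.$$
Since $-G$ is non-decreasing, $-\diff G$ is a positive measure of total mass $G(a)-G(b)$. The term $G(b)\Phi(b)$ cannot simply be bounded by $|G(b)|M_1$, because that would not give the claimed shape. Instead I will split $\Phi(x) = \Phi(b) - (\Phi(b)-\Phi(x))$, which yields
$$\int_a^b FG\diff t = G(a)\Phi(b) + \int_a^b \bigl(\Phi(b)-\Phi(x)\bigr)\diff G(x).$$
This is not yet balanced either. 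The cleaner route is to split $G = G(b) + (G - G(b))$. Then
$$\int_a^b FG = G(b)\Phi(b) + \int_a^b F\,(G-G(b)).$$
Here $G-G(b)$ is non-negative and non-increasing, so Bonnet's form of the second mean value theorem gives $\int_a^b F\,(G-G(b)) = (G(a)-G(b))\Phi(\xi)$ for some $\xi$, applied separately to the real and imaginary parts. Hence the modulus of this piece is at most $|G(a)-G(b)|\,M_1$. Symmetrically, splitting $G = G(a) - (G(a)-G)$ and using the version of Bonnet anchored at $b$ produces tail integrals $\int_x^b F$ instead.

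To reach exactly $|G(a)|M_1 + |G(b)|M_2$, I will combine the Stieltjes representation differently. Write
$$\int_a^b G\diff\Phi = \int_a^b G^+\diff\Phi - \int_a^b G^-\diff\Phi,$$
where $G^{\pm}$ are the positive and negative parts of $G$; both are monotone. If $G$ is non-increasing, $G^+$ is non-increasing and non-negative, so Bonnet gives $\bigl|\int G^+\diff\Phi\bigr| \ioe G^+(a)M_1$. Meanwhile $G^-$ is non-decreasing and non-negative, so the mirrored Bonnet gives $\bigl|\int G^-\diff\Phi\bigr| \ioe G^-(b)M_2$. Since $G^+(a)\ioe|G(a)|$ and $G^-(b)\ioe |G(b)|$, the inequality follows. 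For complex $F$, Bonnet applies directly to complex $F$, because $\int_a^b F h = \int_a^b h(a)\diff\Phi + \dots$ reduces to a convex combination of the values $\Phi(\xi)$ through the Stieltjes representation $\int h\diff\Phi = -\int \Phi\diff h + h(b)\Phi(b)$ with a positive measure. This is the step to check carefully.

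The supplementary claim follows from the same decomposition. If $G(a)G(b)\soe0$ and $|G(a)|\soe|G(b)|$, then $G$ is non-increasing on $[a,b]$ with $G\soe0$ throughout, or the mirror case with $-G$. Hence $G^-\equiv 0$, only the first term survives, and the bound $|G(a)|M_1$ holds. The main obstacle is the bookkeeping in the Bonnet step for complex-valued $F$: one cannot take a single $\xi$, but the bound via the total mass of the positive measure $-\diff h$ plus the boundary term $h(b)\Phi(b)$ gives $\ioe h(a)M_1$ directly.
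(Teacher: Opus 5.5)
Your final argument (the splitting $G=G^{+}-G^{-}$) is correct. It is genuinely your own route, since the paper gives no proof of this theorem and only refers to Ostrowski, Mitrinovi\'c and Bullen.

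After reducing to non-increasing $G$, you bound each piece by Stieltjes integration by parts:
\begin{itemize}
\item $\int_a^b FG^{+}=G^{+}(b)\Phi(b)+\int_a^b\Phi\,\mathrm{d}(-G^{+})$ is at most $G^{+}(b)M_1+\bigl(G^{+}(a)-G^{+}(b)\bigr)M_1=G^{+}(a)M_1$ in modulus;
\item with $\Psi(x)=\int_x^bF$, $\int_a^b FG^{-}=G^{-}(a)\Psi(a)+\int_a^b\Psi\,\mathrm{d}G^{-}$ is at most $G^{-}(b)M_2$ in modulus.
\end{itemize}

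Compare this with the usual textbook derivation from the Weierstrass--du Bois-Reymond form $\int_a^bFG=G(a)\int_a^\xi F+G(b)\int_\xi^bF$. That form gives the inequality in one line, but only for real $F$, because the real and imaginary parts would in general need different points $\xi$. Your total-variation estimate instead has three advantages:
\begin{itemize}
\item it works verbatim for complex $F$ with constant $1$;
\item it gives the slightly sharper bound $G^{+}(a)M_1+G^{-}(b)M_2$;
\item it makes the supplementary claim immediate, since $G^{-}\equiv0$ when $G\ge0$ is non-increasing.
\end{itemize}

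When you write it up, delete the two abandoned attempts. These are the splitting $G=G(b)+(G-G(b))$, which only gives $|G(b)|M_1+|G(a)-G(b)|M_1$, and the rearrangement producing $G(a)\Phi(b)$. Also replace the appeal to a single Bonnet point $\xi$ for complex $F$ by the total-variation bound you state at the end.

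Finally, if $F$ is only Lebesgue integrable, justify $\int_a^bFG\,\mathrm{d}t=\int_a^bG\,\mathrm{d}\Phi$ explicitly. Over partition intervals $I_i$, a Riemann--Stieltjes sum of $G$ against $\Phi$ differs from $\int_a^bFG$ by at most $V_a^b(G)\max_i\int_{I_i}|F|$, and this tends to $0$ with the mesh.
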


\begin{proof}
See \cite[page 141]{ost54}, \cite[p. 301--302]{mitrL} or \cite[p. 136]{bul98}.
\end{proof}

\begin{cor}
\label{cor:moy_2_psi}
Let $f : \left[ a,b \right] \to \R$ be a monotone function on $\left[ a,b \right]$. Then
$$\left| \int_a^b \psi(t) \, f(t) \diff{t} \right| \leqslant \mfrac{1}{8} \Bigl( \left| f(a) \right| + \left| f(b) \right| \Bigr).$$
Furthermore, if $f(a) f(b) \geqslant 0$ and $\left| f(a) \right| \geqslant \left| f(b) \right|$, then the $2$nd term may be omitted.
\end{cor}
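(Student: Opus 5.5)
We apply Theorem~\ref{th:moy_1} with $F = \psi$ and $G = f$. The whole argument then rests on one estimate: a uniform bound for the antiderivative of $\psi$ over an arbitrary interval. Concretely, we show that for all real $u \leqslant v$,
$$\left| \int_u^v \psi(t) \diff{t} \right| \leqslant \tfrac{1}{8}.$$

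The bound comes from the periodic primitive of $\psi$. Since $\psi$ is $1$-periodic with mean zero, the function
$$\Psi(t) := \int_0^t \psi(s) \diff{s} = \tfrac{1}{2}\left( \{t\}^2 - \{t\} \right) = \tfrac{1}{2} \left( B_2(\{t\}) - \tfrac{1}{6} \right)$$
is itself $1$-periodic. On $[0,1)$ it equals $\frac{1}{2}(u^2-u)$, so its values lie in $\left[ -\tfrac{1}{8}, 0 \right]$. Any integral $\int_u^v \psi$ equals $\Psi(v) - \Psi(u)$, a difference of two numbers in $\left[ -\tfrac{1}{8}, 0 \right]$, and hence has absolute value at most $\frac{1}{8}$. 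This holds in particular for the integrals $\int_a^x \psi$ and $\int_x^b \psi$ with $a \leqslant x \leqslant b$.

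Now insert this into Theorem~\ref{th:moy_1}, taking $F = \psi$ (integrable, bounded) and $G = f$ (real and monotone). Both maxima appearing there are bounded by $\frac{1}{8}$, which gives
$$\left| \int_a^b \psi(t) f(t) \diff{t} \right| \leqslant \tfrac{1}{8} \left( |f(a)| + |f(b)| \right).$$
The refinement is immediate from the theorem: if $f(a)f(b) \geqslant 0$ and $|f(a)| \geqslant |f(b)|$, Theorem~\ref{th:moy_1} allows the second term to be omitted, leaving $\frac{1}{8}|f(a)|$.

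There is no real obstacle in this argument. The one point requiring care is the constant. Bounding $\Psi(v)$ and $\Psi(u)$ separately would only give $\frac{1}{4}$; the sharp value $\frac{1}{8}$ comes from observing that both values have the same sign, so their difference is bounded by the length of the range $\left[ -\tfrac{1}{8}, 0 \right]$.
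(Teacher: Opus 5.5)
Your proposal is correct and follows the paper's proof: the paper also applies Theorem~\ref{th:moy_1} with $F=\psi$ and $G=f$. It writes the primitive as $\tfrac12\psi(t)^2$, which differs from your $\Psi$ only by the constant $\tfrac18$, and uses $0\leqslant\psi(t)^2\leqslant\tfrac14$ to bound both maxima by $\tfrac18$, exactly as you do with the range $\left[-\tfrac18,0\right]$ of $\Psi$.
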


\begin{proof}
We use Theorem~\ref{th:moy_1} with $F = \psi$ and $G=f$, yielding
\begin{align*}
   \left| \int_a^b \psi(t) \, f(t)\diff{t} \right| & \leqslant \left| f(a) \right| \, \max_{a \leqslant x \leqslant b} \left| \int_a^x \psi(t) \, \mathrm{d}t \right| + \left| f(b) \right| \, \max_{a \leqslant x \leqslant b} \left| \int_x^b \psi(t) \, \mathrm{d}t \right| \\
   &=  \mfrac{1}{2} \biggl( \left| f(a) \right| \, \max_{a \leqslant x \leqslant b} \left| \psi(x)^2 - \psi(a)^2 \right| + \left| f(b) \right| \, \max_{a \leqslant x \leqslant b} \left| \psi(b)^2 - \psi(x)^2 \right| \biggr) \\
   & \leqslant \mfrac{1}{8} \Bigl( \left| f(a) \right| + \left| f(b) \right| \Bigr)
\end{align*}
where we used $\psi(t)^2 \leqslant \frac{1}{4}$. 
\end{proof}

\section{Explicit Euler-Mclaurin summation formul\ae}

\subsection*{Classical results}

\noindent
For the proofs, see \cite{mac}.

\begin{thm}
Let $k \in \Z_{\geqslant 1}$ and $f \in C^k \left[ 1, x \right]$. Then, for all $x \geqslant 1$, we have
\begin{equation}
   \sum_{n \leqslant x} f(n) = \int_1^x f(t) \, \mathrm{d}t + \sum_{j=1}^k \mfrac{1}{j!}\left\{ (-1)^j B_j\left( \left\{ x\right\} \right) f^{(j-1)}(x) - B_j f^{(j-1)}(1) \right\} + \mfrac{(-1)^{k+1}}{k!} \int_1^x B_k \left( \left\{ t\right\} \right) f^{(k)}(t) \diff{t}. \label{eq:em_fini_0}
\end{equation}
Furthermore, if $f \in C^k \left[ 1,+ \infty \right[$ and if $\displaystyle \int_1^\infty \bigl | f^{(k)}(t) \bigr | \diff{t}$ converges, then there exists a constant $\gamma_{f,k}$ such that
\begin{equation}
   \sum_{n \leqslant x} f(n) = \int_1^x f(t) \, \mathrm{d}t + \gamma_{f,k} + \sum_{j=1}^k \mfrac{(-1)^j}{j!} B_j\left( \left\{ x\right\} \right) f^{(j-1)}(x)  + \mfrac{(-1)^{k}}{k!} \int_x^\infty B_k \left( \left\{ t\right\} \right) f^{(k)}(t) \, \diff{t} \label{eq:em_fini}
\end{equation}
where $\gamma_{f,k}$ is given by $\gamma_{f,k} :=$\begin{small}$\displaystyle - \medop \sum_{j=1}^k \mfrac{B_{j}}{j!} f^{(j-1)}(1) + \frac{(-1)^{k+1}}{k!} \medint \int_1^\infty B_{k} \left( \left\{ t\right\} \right)  f^{(k)}(t) \diff{t}$.\end{small}
Also
\begin{equation}
   \sum_{n > x} f(n) = \int_x^\infty f(t) \, \mathrm{d}t + \sum_{j=1}^k \mfrac{(-1)^{j+1}}{j!} B_j\left( \left\{ x\right\} \right) f^{(j-1)}(x)  + \mfrac{(-1)^{k+1}}{k!} \int_x^\infty f^{(k)}(t) B_k \left( \left\{ t\right\} \right) \diff{t}. \label{eq:em_semi_infini}
\end{equation}
\end{thm}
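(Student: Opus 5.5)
The statement is the classical Euler--Maclaurin summation formula with explicit remainder. I would prove \eqref{eq:em_fini_0} by induction on $k$, and then deduce the other two identities from it by letting the upper limit tend to infinity.

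\textbf{Base case $k=1$.} Write the sum as a Stieltjes integral:
$$\sum_{n\leqslant x} f(n) = f(1) + \int_1^x f(t)\,\diff{\lfloor t\rfloor}.$$
Since $\lfloor t\rfloor = t - \tfrac12 - \psi(t)$, this becomes
$$\sum_{n\leqslant x} f(n) = f(1) + \int_1^x f(t)\,\diff t - \int_1^x f(t)\,\diff{\psi(t)}.$$
Integrating the last integral by parts gives
$$-\int_1^x f\,\diff\psi = -\psi(x)f(x) + \psi(1)f(1) + \int_1^x \psi(t) f'(t)\,\diff t.$$
With $\psi(1) = -\tfrac12$ and $B_1 = -\tfrac12$, the boundary terms reduce to $-B_1(\{x\})f(x) - B_1 f(1)$. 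This is exactly the $j=1$ term, with remainder $\frac{(-1)^2}{1!}\int_1^x B_1(\{t\})f'(t)\,\diff t$.

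\textbf{Induction step.} I would use the periodic Bernoulli functions, which satisfy
$$\frac{\diff}{\diff t}\,\frac{B_{k+1}(\{t\})}{(k+1)!} = \frac{B_k(\{t\})}{k!}$$
for $k\geqslant 1$. This holds almost everywhere, and $B_{k+1}(\{t\})$ is continuous for $k\geqslant 1$. Integrating the remainder by parts then gives
$$\frac{(-1)^{k+1}}{k!}\int_1^x B_k(\{t\})f^{(k)}(t)\,\diff t = \frac{(-1)^{k+1}}{(k+1)!}\Bigl[B_{k+1}(\{t\})f^{(k)}(t)\Bigr]_1^x + \frac{(-1)^{k+2}}{(k+1)!}\int_1^x B_{k+1}(\{t\})f^{(k+1)}(t)\,\diff t.$$
I now check the boundary terms against the $j=k+1$ term of the formula. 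At $t=1$ we have $B_{k+1}(\{1\}) = B_{k+1}$. For $k+1$ odd and at least $3$, $B_{k+1} = 0$, so the sign at $t=1$ does not matter there. For $k+1$ even, $(-1)^{k+1} = -1$, so the boundary term at $1$ is $-\frac{B_{k+1}}{(k+1)!}f^{(k)}(1)$. At $t=x$, the term $\frac{(-1)^{k+1}}{(k+1)!}B_{k+1}(\{x\})f^{(k)}(x)$ has exactly the sign $(-1)^{k+1}$ required by \eqref{eq:em_fini_0}. Hence the statement holds with $k+1$ in place of $k$.

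\textbf{The infinite versions.} Suppose $\int_1^\infty |f^{(k)}|$ converges. Since $B_k(\{t\})$ is bounded, the integral $\int_1^\infty B_k(\{t\})f^{(k)}(t)\,\diff t$ converges absolutely. I split $\int_1^x = \int_1^\infty - \int_x^\infty$ in \eqref{eq:em_fini_0} and collect every term independent of $x$ into $\gamma_{f,k}$; this gives \eqref{eq:em_fini}.

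For \eqref{eq:em_semi_infini}, we may assume $\int_x^\infty f$ converges, and that $f^{(j-1)}(y)\to 0$ as $y\to\infty$ for $j\leqslant k$; these are the implicit hypotheses under which the statement makes sense. Apply \eqref{eq:em_fini_0} at $y$ and at $x$, subtract, and let $y\to\infty$. The boundary terms at $y$ vanish, and the signs flip because the difference is taken with $x$ as the lower endpoint.

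The only delicate points are the behaviour at the endpoints when $x$ is an integer and these implicit decay hypotheses. Neither is a real obstacle: the Stieltjes convention $\sum_{n\leqslant x}$ with a right-continuous floor function handles the endpoints consistently.
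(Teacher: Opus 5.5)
Your proof is correct. It is the standard argument: Stieltjes integration against $\lfloor t\rfloor$ for $k=1$, then induction by integrating the remainder by parts against $B_{k+1}(\{t\})/(k+1)!$ (with $B_{k+1}=0$ for odd $k+1\geqslant 3$ reconciling the sign at $t=1$), and finally letting the upper limit tend to infinity; the paper gives no proof of its own and simply refers to a standard source for this classical derivation. The decay $f^{(j-1)}(y)\to 0$ that you take as an implicit hypothesis for the tail formula is in fact automatic once $\sum f(n)$ converges and $\int_1^\infty |f^{(k)}|<\infty$ (compare $\Delta^{j}f(n)\to 0$ with $f^{(j)}(n)$, descending from $j=k-1$), and convergence of $\int_x^\infty f$ then follows from your identity at $y$.
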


\subsection*{Euler's and Sonine's versions}

\begin{thm}
\label{th:eulermac}
Let $a < b \in \R$ or $x > 1$ be real numbers.
\begin{enumerate}
   \item[\tri] {\rm (Euler)}. Let $f \in C^1 \left[ a,b\right]$. Then
   $$\sum_{a <n \leqslant b} f(n) = \int_a^b f(t) \, \mathrm{d}t + \psi(a) f(a) - \psi(b) f(b) + \int_a^b \psi \left( t\right) f^{\, \prime}(t) \diff{t}.$$
   \item[\tri] {\rm (Sonine)}. Let $f \in C^1 \left[ 1, + \infty \right[$ such that $f^{\, \prime}$ is ultimately monotone from a certain rank $A_f \geqslant 1$ and $\lim\limits_{t \to + \infty} f^{\, \prime} (t) = 0$. Then, for all $x \geqslant 1$,
   \begin{equation}
      \sum_{n\leqslant x} f(n) =\int_1^x f(t) \, \mathrm{d}t + \gamma_{f,1} - \psi(x) f(x) - \int_x^\infty  \psi(t) f^{\, \prime}(t) \, \diff{t} \label{eq:em_Sonine}
   \end{equation}
where $\gamma_{f,1}$ is given by $\gamma_{f,1} := $\begin{small}$\displaystyle \mfrac{1}{2} f(1) + \medint \int_1^\infty  \psi(t) f^{\, \prime}(t)\, \diff{t}$\end{small}. In particular, for all $x \geqslant A_f$, we have
   \begin{equation}
      \sum_{n\leqslant x} f(n) =\int_1^x f(t) \, \diff{t} + \gamma_{f,1} - \psi(x) f(x) + O^{\star} \left( \mfrac{1}{8} |f^{\, \prime}(x)| \right). \label{eq:em_Sonine_reste}
   \end{equation}
\end{enumerate}
\end{thm}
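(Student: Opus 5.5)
These are the classical Euler--Maclaurin formulas, and I would obtain both parts from Euler's identity by integrating against $\psi$.

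For Euler's version, I would write the sum as a Riemann--Stieltjes integral over $]a,b]$ against $\lfloor t \rfloor$. Since $\lfloor t\rfloor = t - \tfrac12 - \psi(t)$, this gives
$$\sum_{a<n\leqslant b} f(n) = \int_a^b f(t)\diff{t} - \int_a^b f(t) \diff{\psi(t)}.$$
I would then integrate the last integral by parts:
$$\int_a^b f \diff{\psi} = f(b)\psi(b) - f(a)\psi(a) - \int_a^b \psi(t) f'(t)\diff{t}.$$
This is legitimate because $f\in C^1$ and $\psi$ is of bounded variation. Substituting back gives the stated formula.

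For Sonine's version, I would apply Euler's formula with $a=1$, $b=x$ and add $f(1)$, using $\psi(1)=-\tfrac12$. This yields
$$\sum_{n\leqslant x} f(n) = \int_1^x f(t)\diff{t} + \tfrac12 f(1) - \psi(x)f(x) + \int_1^x \psi f'.$$
The key step is showing that $\int_1^\infty \psi(t) f'(t)\diff{t}$ converges; I expect this to be the main obstacle. On $[A_f,\infty[$ the function $f'$ is monotone and tends to $0$. Corollary~\ref{cor:moy_2_psi} then gives, for $A_f\leqslant u<v$,
$$\left|\int_u^v \psi f'\right| \leqslant \tfrac18\bigl(|f'(u)|+|f'(v)|\bigr) \to 0.$$
So the Cauchy criterion holds and the integral converges. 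Writing $\int_1^x = \int_1^\infty - \int_x^\infty$ then produces \eqref{eq:em_Sonine}, with $\gamma_{f,1}$ as defined.

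For \eqref{eq:em_Sonine_reste}, I take $x\geqslant A_f$ and bound the tail $\int_x^\infty \psi f'$ as the limit of $\int_x^v \psi f'$ as $v\to\infty$. Here $f'$ is monotone on $[x,v]$ and tends to $0$, so $f'(x)$ and $f'(v)$ have the same sign and $|f'(x)|\geqslant |f'(v)|$. The refined clause of Corollary~\ref{cor:moy_2_psi} therefore applies and gives the bound $\tfrac18|f'(x)|$. Letting $v\to\infty$ completes the argument.
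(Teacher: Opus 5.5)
Your proof is correct. For \eqref{eq:em_Sonine_reste} it is the paper's argument: Corollary~\ref{cor:moy_2_psi} with its refined clause, after checking that $f'$ has constant sign and nonincreasing modulus on $[x,\infty[$.

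For \eqref{eq:em_Sonine} the routes differ at one point that matters. The paper simply specializes \eqref{eq:em_fini} to $k=1$. But \eqref{eq:em_fini}, as stated, assumes $\int_1^\infty |f'(t)|\,\mathrm{d}t<\infty$. Under Sonine's hypotheses $f'$ is eventually of one sign, so this amounts to $f$ having a finite limit at infinity. That fails for typical uses such as $f(t)=\log t$ or $f(t)=\sqrt{t}$.

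You instead derive \eqref{eq:em_Sonine} from Euler's formula on $[1,x]$. You then prove convergence of the improper integral $\int_1^\infty \psi(t)f'(t)\,\mathrm{d}t$ directly, by the Cauchy criterion with the bound $\frac18\bigl(|f'(u)|+|f'(v)|\bigr)$. This uses only that $f'$ is eventually monotone with limit $0$, so it proves the statement under exactly the hypotheses given.

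The paper's one-line citation is shorter, and its gap is only formal. Passing from \eqref{eq:em_fini_0} to \eqref{eq:em_fini} needs only conditional convergence of $\int_1^\infty B_1(\{t\})f'(t)\,\mathrm{d}t$, and that is precisely what you supply.

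You also give an actual proof of Euler's version, via Riemann--Stieltjes integration by parts against $\lfloor t\rfloor = t-\frac12-\psi(t)$; the paper leaves this part to the classical references. The only loose end is $x=1$, where Euler's formula (which needs $a<b$) is unavailable. There both sides of \eqref{eq:em_Sonine} trivially equal $f(1)$.
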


\begin{proof}
For \eqref{eq:em_Sonine}, use \eqref{eq:em_fini} with $k=1$. For \eqref{eq:em_Sonine_reste}, use Corollary~\ref{cor:moy_2_psi} and the hypotheses $f^{\, \prime}$ monotone on $[A_f, + \infty [$ and $\lim\limits_{t \to + \infty} f^{\, \prime} (t) = 0$. 
\end{proof}

\subsection*{Effective version with explicit error term}

\begin{thm}
\label{th:em_synthèse}
Let $k \in \Z_{\geqslant 1}$ and $f \in C^{k} \left[ 1, + \infty \right[$. Assume that
\begin{enumerate}
   \item[\tri] $\lim\limits_{t \to + \infty} f^{(k-1)} (t) = 0$ ;
   \item[\tri] There exists $A_f \geqslant 1$ such that  $f^{(k)}(t)$ is of constant sign for all $t \geqslant A_f$ ;
   \item[\tri] The integral $\displaystyle \medint \int_1^\infty \bigl | f^{(k)}(t) \bigr |\diff{t}$ converges.
\end{enumerate}
Then, for all $x \geqslant 1$
$$\sum_{n \leqslant x} f(n)  = \int_1^x f(t) \diff{t} + \gamma_{f,k} + \sum_{j=1}^{k-1} \mfrac{(-1)^j}{j!} B_j\left( \left\{ x\right\} \right) f^{(j-1)}(x) + O^{\star} \left( \rho_k \int_x^\infty \bigl| f^{(k)}(t) \bigr| \diff{t} \right)$$
where $\gamma_{f,k}$ is given by $\gamma_{f,k} := $\begin{small}$\displaystyle - \medop \sum_{j=1}^k \mfrac{B_{j}}{j!} f^{(j-1)}(1) + \mfrac{(-1)^{k+1}}{k!} \int_1^\infty  B_{k} \left( \left\{ t\right\} \right) f^{(k)}(t) \diff{t}$\end{small}, and
\begin{small}
\begin{center}
\begin{tabular}{ccccc}
$k$ & $1$ & $2$ & $k \geqslant 3$ odd & $k \geqslant 4$ even \\
& & & & \\
$\rho_k$ & $1$ & $\mfrac{1}{8}$ & $ \mfrac{4}{(2 \pi)^k} $ & $ \mfrac{4 \zeta(k)}{(2 \pi)^k} $
\end{tabular}
\end{center}
Furthermore, if $\displaystyle \sum_{n \geqslant 1} f(n)$ converges, then
$$\sum_{n > x} f(n) = \int_x^{\infty} f(t) \, \mathrm{d}t +  \sum_{j=1}^{k-1} \frac{(-1)^{j+1}}{j!} B_j\left( \left\{ x\right\} \right) f^{(j-1)}(x) + O^{\star} \left( \rho_k \int_x^\infty \bigl| f^{(k)}(t) \bigr| \, \mathrm{d}t \right).$$
\end{small}
\end{thm}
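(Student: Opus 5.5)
The statement to prove is Theorem~\ref{th:em_synthèse}. The plan is to begin from the exact formula \eqref{eq:em_fini} with the same $k$. The hypothesis that $\int_1^\infty |f^{(k)}|$ converges makes it applicable, and it already gives the constant $\gamma_{f,k}$. It then suffices to split off the $j=k$ boundary term and estimate what remains. Concretely, \eqref{eq:em_fini} reads
$$\sum_{n\le x} f(n) = \int_1^x f + \gamma_{f,k} + \sum_{j=1}^{k-1}\tfrac{(-1)^j}{j!}B_j(\{x\})f^{(j-1)}(x) + R_k(x),$$
where
$$R_k(x) := \tfrac{(-1)^k}{k!}\Bigl( B_k(\{x\})f^{(k-1)}(x) + \int_x^\infty B_k(\{t\}) f^{(k)}(t)\diff t\Bigr).$$
Since $f^{(k-1)}(t)\to 0$, we have $f^{(k-1)}(x) = -\int_x^\infty f^{(k)}(t)\diff t$. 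Hence
$$R_k(x)=\tfrac{(-1)^k}{k!}\int_x^\infty \bigl(B_k(\{t\})-B_k(\{x\})\bigr) f^{(k)}(t)\diff t.$$
The whole proof reduces to showing $|R_k(x)|\le \rho_k\int_x^\infty |f^{(k)}|$.

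For $x\ge A_f$, $f^{(k)}$ has constant sign on $[x,\infty[$, so $|R_k(x)|\le \frac{1}{k!}\sup_{t}|B_k(\{t\})-B_k(\{x\})|\int_x^\infty|f^{(k)}|$. The remaining work is computing the oscillation of $B_k(\{\cdot\})$ case by case.
\begin{itemize}
\item[\tri] For $k=1$ the oscillation of $\psi$ is at most $1$, giving $\rho_1=1$.
\item[\tri] For $k=2$ the range of $B_2$ is $[-\frac1{12},\frac16]$, so the oscillation is $\frac14$ and $\rho_2=\frac14\cdot\frac12=\frac18$.
\item[\tri] For $k\ge 3$, use the Fourier expansion $B_k(\{t\})=-\frac{2\,k!}{(2\pi)^k}\sum_{m\ge1} m^{-k}\cos(2\pi m t - k\pi/2)$. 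It gives $\|B_k(\{\cdot\})\|_\infty\le 2k!\zeta(k)(2\pi)^{-k}$, hence an oscillation of at most $4k!\zeta(k)(2\pi)^{-k}$, which is exactly $\rho_k$ for even $k$.
\item[\tri] For odd $k$, $\rho_k=4/(2\pi)^k$ does not contain $\zeta(k)$, so a finer bound is needed. Here I would use the classical fact that for odd $k\ge3$ the maximum of $|B_k(\{t\})|$ is strictly smaller than the even-index bound. The factor $\zeta(k)$ can then be dropped, at least if one checks $\zeta(k)\cdot\sup|\text{normalized odd sine series}|\le 1$ via the known sharper bound $|B_k(\{t\})|\le 2k!(2\pi)^{-k}$ for odd $k$ (Lehmer's bounds).
\end{itemize}
I expect this odd-$k$ constant to be the main obstacle.

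A second subtlety is that the statement claims the bound for all $x\ge1$, not only for $x\ge A_f$. For $x<A_f$ the sign argument fails. I would handle this either by reading the estimate as valid for $x\ge A_f$, which is how it is used in the paper, or by bounding $|B_k(\{t\})-B_k(\{x\})|$ uniformly by the oscillation anyway. The latter works whatever the sign of $f^{(k)}$, since only $|f^{(k)}|$ appears, so the constant-sign hypothesis is in fact only needed to justify rewriting $f^{(k-1)}(x)$ as an integral. This second reading makes the bound hold for all $x\ge1$.

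Finally, for the tail sum, I would subtract the formula just obtained from the total $\sum_{n\ge1}f(n)=\int_1^\infty f+\gamma_{f,k}$, obtained by letting $x\to\infty$. The boundary terms vanish in that limit because $f^{(j-1)}(x)\to0$, which follows from $f^{(k-1)}\to0$ and the convergence assumptions. The subtraction gives the stated expression with the same remainder $-R_k(x)$, and hence the same $O^\star$ bound.
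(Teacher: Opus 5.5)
Your proposal is correct and follows essentially the paper's proof. Both rewrite the boundary term as $B_k(\{x\})f^{(k-1)}(x)=-B_k(\{x\})\int_x^\infty f^{(k)}(t)\,\mathrm{d}t$, so the remainder becomes $\frac{(-1)^k}{k!}\int_x^\infty\bigl(B_k(\{t\})-B_k(\{x\})\bigr)f^{(k)}(t)\,\mathrm{d}t$, and then bound the oscillation of $B_k(\{\cdot\})$ case by case. The differences are minor: you start directly from \eqref{eq:em_fini}, which is legitimate since absolute convergence of $\int_1^\infty|f^{(k)}|$ makes the paper's Cauchy-criterion step redundant; for odd $k\geqslant 3$ you invoke Lehmer's bound $|B_k(\{t\})|\leqslant 2k!/(2\pi)^k$, which is true, where the paper cites a lemma from its companion work; and you obtain the tail formula by subtraction instead of from \eqref{eq:em_semi_infini}. (The constant-sign hypothesis is in fact not needed even for the rewriting of $f^{(k-1)}(x)$, which uses only $f^{(k-1)}\to 0$.)
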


\begin{proof}
Suppose $t \geqslant A_f \Longrightarrow f^{(k)}(t) \geqslant 0$ and let $\varepsilon > 0$. Since $\lim\limits_{t \to + \infty} f^{(k-1)} (t) = 0$, there exists $B_\varepsilon > 0$ such that, for all $t \geqslant B_\varepsilon$, we have $\bigl| f^{(k-1)} (t) \bigr | \leqslant \varepsilon$. Then let $z > y \geqslant \max \left(A_f, B_\varepsilon \right)$. Since $f^{(k)} \geqslant 0$ and is continuous, we have
$$\left| \int_y^z B_k (\{t\}) f^{(k)}(t) \diff{t} \right| \leqslant M_k \int_y^z f^{(k)}(t) \diff{t} = M_k \left( f^{(k-1)}(z)-f^{(k-1)}(y) \right)  \leqslant 2 M_k \varepsilon$$
where $M_k := \underset{t \in \R}{\max} \, \bigl| B_k (\{t\}) \bigr|$, so that the integral $\displaystyle \medint \int_1^{\infty} B_k (\{t\}) f^{(k)}(t) \, \textrm{d}t$ converges by Cauchy's criteria. We then apply \eqref{eq:em_fini_0}, yielding
\begin{small}
\begin{align*}
   \sum_{n\leqslant x} f(n) &= \int_1^x f(t) \diff{t} + \sum_{j=1}^k \mfrac{1}{j!}\left\{ (-1)^j B_j\left( \left\{ x\right\} \right) f^{(j-1)}(x) - B_j f^{(j-1)}(1) \right\} + \mfrac{(-1)^{k+1}}{k!} \int_1^x B_{k} \left( \left\{ t\right\} \right) f^{(k)}(t) \diff{t} \\
   &= \int_1^x f(t) \diff{t} - \sum_{j=1}^k \mfrac{B_j}{j!} f^{(j-1)}(1) + \mfrac{(-1)^{k+1}}{k!} \int_1^\infty B_{k} \left( \left\{ t\right\} \right) f^{(k)}(t) \diff{t} \\
   & \hspace*{1cm} + \sum_{j=1}^k \mfrac{(-1)^j}{j!}  B_j\left( \left\{ x\right\} \right) f^{(j-1)}(x) + \mfrac{(-1)^{k}}{k!} \int_x^{\infty} B_{k} \left( \left\{ t\right\} \right) f^{(k)}(t) \diff{t} \\
   & = \int_1^x f(t) \diff{t} + \gamma_{f,k} + \sum_{j=1}^k \mfrac{(-1)^j}{j!} B_j\left( \left\{ x\right\} \right) f^{(j-1)}(x) + \mfrac{(-1)^{k}}{k!} \int_x^{\infty} B_{k} \left( \left\{ t\right\} \right) f^{(k)}(t) \diff{t}.
\end{align*}
\end{small}
On the other hand, since $\lim\limits_{t \to + \infty} f^{(k-1)} (t) = 0$ and since $f^{(k)}$ is continuous, one may write
$$\int_x^\infty B_k \left( \left\{ x\right\} \right) f^{(k)}(t) \diff{t} = B_k \left( \left\{ x\right\} \right) \int_x^\infty f^{(k)}(t) \diff{t} = - B_k \left( \left\{ x\right\} \right) f^{(k-1)}(x)$$
so that
$$\sum_{n\leqslant x} f(n) = \int_1^x f(t) \diff{t} + \gamma_{f,k} + \sum_{j=1}^{k-1} \mfrac{(-1)^j}{j!}  B_j\left( \left\{ x\right\} \right) f^{(j-1)}(x) + \mfrac{(-1)^{k}}{k!} \int_x^\infty \Bigl( B_k \left( \left\{ t\right\} \right) - B_k \left( \left\{ x\right\} \right) \Bigr)f^{(k)}(t) \diff{t}.$$
We complete the proof with:
\begin{enumerate}
   \item[\tri] $\left| B_1 \left( \left\{ t\right\} \right) - B_1 \left( \left\{ x\right\} \right) \right| = \left| \{t\} - \{x\} \right| \leqslant 1$ ;
   \item[\tri] $\left| B_2 \left( \left\{ t\right\} \right) - B_2 \left( \left\{ x\right\} \right) \right| = \bigl| \{t\}(\{t\}-1) - \{x\}(\{x\}-1) \bigr| \leqslant \frac{1}{4}$, since $0 \leqslant \{t\}(\{t\}-1) \leqslant \frac{1}{4}$ ;
   \item[\tri] If $k \geqslant 3$, $ \bigl| B_k \left( \left\{ t\right\} \right) - B_k \left( \left\{ x\right\} \right) \bigr| \leqslant \mfrac{4 \eta(k) k!}{(2 \pi)^k}$ by \cite[Lemma~9]{bor26}, $\eta(k)$ being given in Proposition~\ref{pro:C_W}.
\end{enumerate}
For the sum $\displaystyle \medop \sum_{n > x} f(n) $, the computations are similar using \eqref{eq:em_semi_infini} instead of \eqref{eq:em_fini_0}.
\end{proof}

\end{document}